\documentclass[12pt,reqno]{amsart}

\usepackage{amsmath,amssymb,amsthm,amsfonts}
\usepackage{mathtools}
\usepackage{geometry}
\usepackage{hyperref}
\usepackage{enumitem}
\usepackage{url}

\theoremstyle{plain}
\newtheorem{theorem}{Theorem}[section]
\newtheorem{lemma}[theorem]{Lemma}
\newtheorem{proposition}[theorem]{Proposition}

\newtheorem{conjecture}[theorem]{Conjecture}

\theoremstyle{definition}
\newtheorem{definition}[theorem]{Definition}
\newtheorem{remark}[theorem]{Remark}
\newtheorem{example}[theorem]{Example}

\theoremstyle{remark}

\numberwithin{equation}{section}

\DeclareMathOperator{\Sym}{Sym}
\DeclareMathOperator{\tr}{tr}
\DeclareMathOperator{\divg}{div}

\DeclareMathOperator{\dist}{dist}

\DeclareMathOperator{\diag}{diag}

\newcommand{\loc}{\mathrm{loc}}

\begin{document}

\title[Anisotropic Obstacle Problems]{Anisotropic Obstacle Problems for Minimal
Surfaces: Regularity of the Free Boundary via the Cahn-Hoffman Transform}

\author{Ezequiel Barbosa \and Rosivaldo Antonio Gonçalves \and Luan de Figueiredo }

\thanks{Departamento de Matem\'atica, Universidade Federal de Minas Gerais
(UFMG), Av.\ Antonio Carlos 6627, Caixa Postal 702, 30123-970,
Belo Horizonte, MG, Brazil}

\thanks{CCET - UNIMONTES}

\email{ezequiel@mat.ufmg.br, luan@mat.ufmg.br, rosivaldo.goncalves@unimontes.br}

\subjclass[2020]{Primary 35R35; Secondary 49Q05, 53A10, 35J86}
\keywords{Free boundary problem, anisotropic minimal surface, obstacle problem,
Cahn-Hoffman transform, regularity theory}

\begin{abstract}
We study an obstacle problem for surfaces minimizing an anisotropic surface energy
of ellipsoidal type. Given a convex obstacle and a boundary datum, we seek a
surface that minimizes the anisotropic area functional while remaining above the
obstacle. The central novelty is the systematic use of the Cahn-Hoffman transform
to convert the anisotropic problem into an equivalent isotropic problem with a
generalized Robin boundary condition. We prove optimal regularity of the solution
($C^{1,1}$ up to the free boundary) and $C^{1,\alpha}$-regularity of the free
boundary itself under a non-degeneracy condition. The singular set of the free
boundary is shown to have Hausdorff dimension at most $n-1$, and a logarithmic
epiperimetric inequality yields its $(n-1)$-rectifiability. The approach combines
Caffarelli's classical theory of obstacle problems with the geometric theory of
anisotropic mean curvature and the Alexandrov reflection principle adapted to the
anisotropic setting.
\end{abstract}

\maketitle

\section{Introduction and Main Results}
\label{sec:intro}

\subsection{Historical context and motivation}

The study of surfaces that minimize area subject to constraints is one of the
oldest problems in the calculus of variations, tracing back to Lagrange's work
(1760) on the Plateau problem. When the surface energy depends not only on area
but also on the orientation of the surface, one enters the realm of
\emph{anisotropic} variational problems. Such problems arise naturally in
crystallography, where the surface energy of a crystal depends on the
crystallographic direction of the exposed facet (Wulff, 1901), and in the
modeling of biological membranes, where the lipid bilayer exhibits
direction-dependent bending rigidity.

The mathematical theory of anisotropic surface energies was developed
systematically by Wulff (1901), who introduced what is now called the
\emph{Wulff shape} as the equilibrium crystal shape minimizing surface energy for
a given enclosed volume. The fundamental connection between the Wulff shape and
the anisotropic surface energy was clarified through the \emph{Cahn-Hoffman vector
field} \cite{CahnHoffman1974}, which provides a geometric interpretation of the
first variation of anisotropic surface energies.

Free boundary problems, on the other hand, are PDE problems in which the domain
of the equation is itself an unknown. The classical \emph{obstacle problem},
introduced by Signorini \cite{Signorini1959} in the context of elasticity and
studied analytically by Lewy and Stampacchia \cite{LewyStampacchia1969}, Brezis
and Kinderlehrer \cite{BrezisKinderlehrer1973}, and most notably by Caffarelli
\cite{Caffarelli1977}, seeks the equilibrium position of an elastic membrane
constrained to lie above a given obstacle. The portion of the boundary separating
the contact region from the free region --- the \emph{free boundary} --- is the
central object of study.

Caffarelli's paper \cite{Caffarelli1977} showed that the free boundary is smooth
($C^\infty$) outside a closed singular set of Hausdorff dimension at most $n-1$.
This work opened the modern theory of free boundary regularity; see the monograph
\cite{CaffarelliSalsa2005}, the survey \cite{Figalli2018}, and the lecture notes
\cite{RosOton2019} for comprehensive accounts.

The free boundary problem for minimal surfaces --- where one seeks a surface of
minimal area whose boundary is constrained to a given supporting surface but is
otherwise free to move --- was studied by Nitsche \cite{Nitsche1985}, Gr\"{u}ter and Jost
\cite{GruterJost1986}, and Gr\"{u}ter \cite{Gruter1987}. The connection between
free boundary minimal surfaces and the obstacle problem is classical;
Kinderlehrer \cite{Kinderlehrer1973} studied how a minimal surface detaches from
an obstacle, a question that proved central to subsequent developments.

\subsection{The anisotropic obstacle problem}

This paper introduces and studies an \emph{anisotropic obstacle problem} combining
these two classical threads: the anisotropic surface energy of Wulff and the
obstacle problem of Caffarelli. Given a convex obstacle $\psi$ and a boundary
datum, we seek a function $u$ minimizing the anisotropic area functional
\begin{equation}\label{eq:anisotropic-functional}
\mathcal{F}_\Phi(u) = \int_D \Phi(Du) \, dx = \int_D \sqrt{Du^T A \, Du} \, dx,
\end{equation}
where $A \in \Sym^+_{n+1}(\mathbb{R})$ is a symmetric positive definite matrix
encoding the anisotropy, subject to the constraint $u \geq \psi$.

The functional \eqref{eq:anisotropic-functional} corresponds to an
\emph{ellipsoidal anisotropy}, where the Wulff shape is the ellipsoid
\begin{equation}\label{eq:wulff-shape}
W_\Phi = \{ x \in \mathbb{R}^{n+1} : x^T A^{-1} x \leq 1 \}.
\end{equation}
This is the simplest and most natural class of anisotropies, arising when the
surface energy has a quadratic dependence on orientation. The general case of a
convex anisotropy $\gamma : S^n \to \mathbb{R}^+$ leads to the Cahn-Hoffman map
$\xi_\gamma(\nu) = D\gamma|_{T_\nu S^n} + \gamma(\nu)\nu$, whose image is the
boundary of the Wulff shape \cite{HauserVoigt2005, KoisoPalmer2019}.

Our principal tool is the \emph{Cahn-Hoffman transform}
\begin{equation}\label{eq:cahn-hoffman-transform}
S(x) = A^{-1/2} x,
\end{equation}
which maps the Wulff shape $W_\Phi$ onto the Euclidean unit ball $B_1(0)$
--- a fact we verify carefully in Proposition~\ref{prop:S-properties}.
This transform was used implicitly by Wulff and explicitly by Dinghas
\cite{Dinghas1944} and Taylor \cite{Taylor1978} in the study of anisotropic
isoperimetric problems. The present contribution is to systematize its use in
free boundary problems, showing that it converts the anisotropic obstacle problem
into an isotropic one with a generalized Robin boundary condition on the free
boundary.

\subsection{Main results}

Let $D \subset \mathbb{R}^{n+1}$ be a bounded domain with $C^2$ boundary. Let
$\psi \in C^{1,1}(\bar{D})$ be an obstacle with $\psi < 0$ on $\partial D$,
and let $A \in \Sym^+_{n+1}(\mathbb{R})$ have eigenvalues satisfying
$0 < \lambda_1 \leq \cdots \leq \lambda_{n+1}$. Define the convex set
\begin{equation}\label{eq:admissible-set}
\mathcal{K} = \{ u \in W^{1,1}(D) : u \geq \psi \; \text{a.e.\ in } D,
\; u = 0 \text{ on } \partial D \}.
\end{equation}

We study the minimization problem
\begin{equation}\label{eq:minimization-problem}
\min_{u \in \mathcal{K}} \mathcal{F}_\Phi(u).
\end{equation}

The \emph{contact set} is $\Lambda = \{x \in D : u(x) = \psi(x)\}$, the
\emph{free set} is $\Omega = D \setminus \Lambda$, and the \emph{free boundary}
is $\Gamma = \partial \Lambda \cap D$.

\begin{theorem}[Existence and regularity of the solution]
\label{thm:existence-regularity}
There exists a unique minimizer $u \in \mathcal{K}$ of
\eqref{eq:minimization-problem}. Moreover:
\begin{enumerate}[label=\textup{(\roman*)}]
\item $u \in C^{0,1}_{\loc}(D)$;
\item $u \in C^{1,1}_{\loc}(\Omega \cup \Gamma)$;
\item $u - \psi \in C^{1,1}_{\loc}(D)$ and
\begin{equation}\label{eq:growth-estimate}
0 \leq u(x) - \psi(x) \leq C \dist(x, \Gamma)^2
\quad \text{for all } x \in D.
\end{equation}
\end{enumerate}
\end{theorem}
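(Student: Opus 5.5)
The plan is to reduce everything to an isotropic problem with the linear change of variables behind the Cahn--Hoffman transform \eqref{eq:cahn-hoffman-transform}. Writing $x = A^{1/2}y$ and setting $v(y) = u(A^{1/2}y)$ and $\tilde\psi(y) = \psi(A^{1/2}y)$ on $\tilde D = S(D)$, the chain rule gives $Dv(y) = A^{1/2}Du(x)$. Hence $|Dv(y)| = \sqrt{Du^T A\,Du} = \Phi(Du)$, and
\[
\mathcal{F}_\Phi(u) = (\det A)^{1/2}\int_{\tilde D}|Dv|\,dy .
\]
The map $S$ is a linear isomorphism, as recorded in Proposition~\ref{prop:S-properties}. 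So $\tilde D$ has $C^2$ boundary, $\tilde\psi\in C^{1,1}$ is convex with $\tilde\psi<0$ on $\partial\tilde D$, and the constraint $u\ge\psi$ and the trace condition transfer verbatim. Every statement (i)--(iii) is invariant under $S$ up to constants depending on $\lambda_1,\lambda_{n+1}$, since distances distort by at most $\lambda_{n+1}^{1/2}\lambda_1^{-1/2}$. It therefore suffices to treat the isotropic obstacle problem for the total variation (least-gradient) functional $\int|Dv|$ with obstacle $\tilde\psi$.

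\textbf{Existence.} $W^{1,1}$ is not reflexive and $\int|Dv|$ is only $1$-homogeneous, so I would first relax the problem to $BV(\tilde D)$. The relaxed energy is
\[
\int_{\tilde D}|Dv| + \int_{\partial\tilde D}|v|\,d\mathcal{H}^{n},
\]
and the obstacle is kept as an a.e.\ constraint. A minimizer of the relaxed problem exists by lower semicontinuity and $BV$ compactness. Next I would show that it attains the zero boundary datum and lies in $W^{1,1}$. Since $\tilde\psi<0$ on $\partial\tilde D$, the obstacle is inactive near the boundary. There the problem is an unconstrained least-gradient problem, and the $C^2$ boundary with (to be checked) nonnegative mean curvature allows the Sternberg--Williams--Ziemer barrier construction. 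For interior Lipschitz regularity (i), I would compare $v$ with its translates $v(\cdot+h)$ in the usual way: $\max$ and $\min$ combinations are admissible because the constraint is a lattice condition. Combined with the Lipschitz continuity of $\tilde\psi$, this gives $|v(y+h)-v(y)|\le C|h|$ locally.

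\textbf{Uniqueness.} This is the delicate point, because $\int|Dv|$ is not strictly convex. I would argue through superlevel sets. For a.e.\ $t$, the set $\{v\ge t\}$ minimizes perimeter in $\tilde D$ among sets containing the convex set $\{\tilde\psi\ge t\}$, with prescribed boundary behaviour; this is the obstacle version of Bombieri--De Giorgi--Giusti. Convexity of $\tilde\psi$ makes each such obstacle set convex, and a perimeter minimizer outside a convex obstacle with fixed boundary trace is unique. From this, two minimizers have the same level sets and hence coincide.

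\textbf{Regularity (ii)--(iii).} In $\Omega$ the level sets are area-minimizing boundaries, so $v$ is regular there. Near $\Gamma$, the part of $\partial\{v\ge t\}$ that touches $\partial\{\tilde\psi\ge t\}$ is an obstacle problem for minimal hypersurfaces with a $C^{1,1}$ obstacle. By the classical $C^{1,1}$ theory for that problem (Kinderlehrer, Caffarelli), the level sets are $C^{1,1}$ with uniform bounds. Assembling these level-set bounds gives $w=v-\tilde\psi\in C^{1,1}_{\loc}$, with $w\ge0$ and $w=|Dw|=0$ on $\Lambda$. Taylor expansion of a $C^{1,1}$ function vanishing to first order on $\Lambda$ then yields $0\le w\le C\dist(\cdot,\Gamma)^2$, which is \eqref{eq:growth-estimate}.

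The main obstacle is turning the $C^{1,1}$ regularity of individual level sets into a $C^{1,1}$ bound for $v$ itself. This needs a nondegeneracy or lower bound on $|Dv|$ near $\Gamma$, which I would try to extract from the convexity and $C^{1,1}$ bound of $\psi$ via a comparison argument.
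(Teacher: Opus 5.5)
\textbf{The step you flag as the main obstacle cannot be completed: for the $1$-homogeneous integrand $\Phi$, part (iii) is false.} Here is a counterexample satisfying all the hypotheses. Take $n=1$, $A=I$, $D=B_1\subset\mathbb{R}^2$ and $\psi(x)=1-2|x|^2$.
\begin{enumerate}[label=\textup{(\alph*)}]
\item Extend any $u\in\mathcal{K}$ by $0$ outside $D$. For $t\in(0,1)$ the set $\{u>t\}$ contains $B_{\rho(t)}$, where $\rho(t)=\sqrt{(1-t)/2}$.
\item Intersecting with a convex set does not increase perimeter, so the coarea formula gives $\int_D|Du|\ge\int_0^1 2\pi\rho(t)\,dt$.
\item Equality holds for $u=\max(\psi,0)$. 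It is therefore a minimizer, and by the equality case the only one.
\item Its free boundary is $\Gamma=\{|x|=1/\sqrt2\}$, and $u-\psi=(2|x|^2-1)^+$. This function has a gradient jump of size $2\sqrt2$ across $\Gamma$ and satisfies $u-\psi\ge 2\sqrt2\,\dist(x,\Gamma)$ in $\Omega$.
\end{enumerate}
So neither $u-\psi\in C^{1,1}_{\loc}(D)$ nor the quadratic growth bound holds. The lower bound on $|Dv|$ you hoped to extract also fails, since $Dv\equiv 0$ on the $\Omega$ side. The conceptual reason is that $C^{1,1}$ control of individual level sets says nothing about their spacing, which is what $|Dv|$ measures. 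For least-gradient problems the level sets of $v$ simply stop following those of $\tilde\psi$ at $\Gamma$, so $Dv$ jumps rather than detaching in a $C^1$ fashion.

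\textbf{Earlier steps also fail as written.}
\begin{enumerate}[label=\textup{(\alph*)}]
\item Superlevel sets of a convex function are not convex; that requires concavity.
\item Worse, if $\psi$ is convex and $\psi<0$ on $\partial D$, then restricting to segments gives $\psi<0$ in all of $D$. Hence $u\equiv 0$ and $\Lambda=\emptyset$. The convexity you import from the abstract empties the statement rather than driving uniqueness.
\item The mean convexity of $\partial D$ you mark ``to be checked'' is not a hypothesis, and without it existence in $\mathcal{K}$ genuinely fails. Take $D=\{1<|x|<3\}$ and $\psi=1-4(|x|-2)^2$. For $t\in(0,1)$, radial slicing shows that the only perimeter minimizer among sets $E$ with $\{\psi>t\}\subset E\subset D$ is $\{1<|x|<2+\tfrac12\sqrt{1-t}\}$. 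Cutting off near $|x|=1$ shows this slicing bound is the infimum over $\mathcal{K}$. A minimizer would then have exactly these superlevel sets, so it would equal $1$ near the inner circle, contradicting the zero trace.
\end{enumerate}

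\textbf{Comparison with the paper.} The paper's proof is exactly the route you rightly rejected. It uses the direct method in $W^{1,1}$ and derives uniqueness from ``strict convexity'' of $\Phi$, which is a norm and hence affine along rays. It invokes De Giorgi--Nash--Moser for the Lipschitz bound. It then applies Caffarelli's $C^{1,1}$ theory, which concerns $\int\frac12|Dv|^2$, to $\int|Dv|$ after the transform $S$. Your objections to these steps are correct, so the paper supplies no missing ingredient either. Part (iii) can only hold after changing the integrand, for instance to a quadratic one, which $S$ maps to the Dirichlet energy.
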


The proof combines the direct method in the calculus of variations with the
Cahn-Hoffman transform and the classical regularity theory for the obstacle
problem. The key observation is that $S$ converts the anisotropic functional into
the isotropic perimeter functional with a weighted obstacle, preserving the
convexity structure essential for regularity.

Concerning the free boundary itself, we say that $x_0 \in \Gamma$ is
\emph{non-degenerate} if
\begin{equation}\label{eq:non-degeneracy}
\sup_{B_r(x_0)} (u - \psi) \geq c r^2 \quad \text{for all } 0 < r < r_0,
\end{equation}
for some $c > 0$ and $r_0 > 0$. This condition, due to Caffarelli
\cite{Caffarelli1977}, rules out situations where the solution touches the obstacle
with infinite order.

\begin{theorem}[Regularity of the free boundary]
\label{thm:free-boundary-regularity}
Let $x_0 \in \Gamma$ be a non-degenerate free boundary point. There exists a
neighborhood $U$ of $x_0$ in which $\Gamma \cap U$ is a $C^{1,\alpha}$-hypersurface
for some $\alpha \in (0,1)$ depending only on $n$ and the ellipticity constants
of $A$. If $\psi \in C^{k,\alpha}(D)$ for some $k \geq 2$, then
$\Gamma \cap U \in C^{k,\alpha}$.
\end{theorem}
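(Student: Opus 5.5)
The argument follows Caffarelli's blow-up scheme, applied to $w = u - \psi$ after the Cahn-Hoffman change of variables. By Theorem~\ref{thm:existence-regularity}(iii), $w \in C^{1,1}_{\loc}(D)$ and $w \ge 0$. Near $x_0$, $u$ solves the Euler-Lagrange equation $\divg(D\Phi(Du)) = 0$ in $\Omega$. Since $Du$ is Lipschitz, this equation can be written in nondivergence form as $a^{ij}(Du)\,\partial_{ij}u = 0$, with $a^{ij} = \partial_{ij}\Phi$ evaluated along $Du$. These coefficients are Lipschitz in $x$ and uniformly elliptic on the range of $Du$, with constants controlled by $\lambda_1,\lambda_{n+1}$. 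Hence $w$ satisfies
\[
a^{ij}(x)\,\partial_{ij} w = f(x)\chi_{\{w>0\}}, \qquad f := -a^{ij}\partial_{ij}\psi ,
\]
in a neighborhood of $x_0$, and this equation holds a.e.\ in $D$ because $D^2 w = 0$ a.e.\ on $\{w = 0\}$. Freezing coefficients at $x_0$ and applying $S$, or more precisely the linear map $(a^{ij}(x_0))^{1/2}$, which plays the same role as $A^{-1/2}$ in Proposition~\ref{prop:S-properties}, the leading operator becomes the Laplacian up to a perturbation of size $O(|x-x_0|)$. So locally we have a classical obstacle problem $\Delta w = f\chi_{\{w>0\}} + O(|x-x_0|)\,|D^2 w|$ with Lipschitz right-hand side.

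\textbf{Step 1 (a sign condition for $f$).} We need $f(x_0) > 0$. The non-degeneracy hypothesis \eqref{eq:non-degeneracy} is what supplies this. If $f(x_0) \le 0$, then the quadratic growth bound \eqref{eq:growth-estimate}, together with the normalized rescalings $w_r(y) = w(x_0 + ry)/r^2$, produces blow-ups that are nonnegative global solutions of $\Delta w_0 = f(x_0)\chi_{\{w_0>0\}}$. Such solutions must vanish identically when $f(x_0) \le 0$, which contradicts \eqref{eq:non-degeneracy}. I expect this step to be delicate, because the obstacle's convexity is only indirectly tied to the sign of $-a^{ij}\psi_{ij}$. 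I would first try to use the convexity of $\psi$ together with the contact condition, and fall back on non-degeneracy as above if that does not work.

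\textbf{Step 2 (classifying blow-ups).} With $f(x_0) > 0$, the blow-ups $w_r$ are uniformly bounded in $C^{1,1}$ and converge along subsequences. Their limits are global solutions of $\Delta w_0 = f(x_0)\chi_{\{w_0>0\}}$ that are nonzero by non-degeneracy. Weiss's monotonicity formula, adapted to the frozen coefficients and with error terms controlled by the Lipschitz bounds on $a^{ij}$ and $f$, forces every blow-up to be homogeneous of degree two. The Caffarelli classification then says each blow-up is either a half-space solution $\tfrac{f(x_0)}{2}(x\cdot e)_+^2$ or a polynomial. I expect this Weiss step to be the main obstacle: one has to check that the $O(r)$ perturbation from the variable coefficients is integrable in $r$. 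To isolate the regular points, I would use the Lebesgue-density version of the condition, namely positive density of $\Lambda$ at $x_0$, which excludes polynomial blow-ups. Making this reduction precise inside $U$ is part of the work.

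\textbf{Step 3 (flatness implies $C^{1,\alpha}$).} At a regular point, the half-space blow-up shows that $w$ is $\varepsilon$-close to a one-dimensional solution in a small ball. Caffarelli's directional monotonicity lemma then applies: $C\partial_e w - w \ge 0$ for all directions $e$ in a cone around the normal $e_0$. The perturbative version of this lemma for variable Lipschitz coefficients is standard, and it is where the dependence on $n$ and on the ellipticity of $A$ enters. It follows that $\Gamma$ is a Lipschitz graph near $x_0$. Since $\partial_e w$ is a nonnegative solution of a uniformly elliptic equation vanishing on $\Gamma$, the boundary Harnack principle upgrades this to $C^{1,\alpha}$.

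\textbf{Step 4 (higher regularity).} If $\psi \in C^{k,\alpha}$, then the coefficients and $f$ are $C^{k-2,\alpha}$, and the hodograph-Legendre transform of Kinderlehrer-Nirenberg bootstraps the free boundary regularity to $\Gamma \cap U \in C^{k,\alpha}$.
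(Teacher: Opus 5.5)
\textbf{Gap 1: the equation is not uniformly elliptic.} The decisive gap is your opening reduction. The integrand in the statement, $\Phi(p)=\sqrt{p^TAp}$, is positively $1$-homogeneous. So $D\Phi$ is $0$-homogeneous and $D^2\Phi(p)\,p=0$; explicitly $D^2\Phi(p)=\Phi(p)^{-1}\bigl(A-\Phi(p)^{-2}\,Ap\,(Ap)^{T}\bigr)$. Hence $a^{ij}=\partial_{ij}\Phi(Du)$ is \emph{never} uniformly elliptic: it has a zero eigenvalue in the direction of $Du$ (and $Du(x_0)=D\psi(x_0)$ at $x_0\in\Gamma$), and it blows up like $|Du|^{-1}$ as $Du\to0$. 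The equation $\divg(D\Phi(Du))=0$ is an anisotropic $1$-Laplacian: it says the level sets of $u$ are $\Phi$-minimal. This has three consequences.
\begin{itemize}
\item The frozen matrix $a^{ij}(x_0)$ is singular, so it has no invertible square root to play the role of $A^{-1/2}$.
\item The limiting operator $a^{ij}(x_0)\partial_{ij}$ takes no derivatives along $Du(x_0)$, so global obstacle solutions are not rigid: their free boundaries can bend arbitrarily in that direction.
\item Weiss's monotonicity, Caffarelli's classification, the cone of monotonicity $C\partial_e w-w\ge0$, boundary Harnack and the Kinderlehrer--Nirenberg hodograph method all lose their hypotheses.
\end{itemize}
Ellipticity constants in terms of $\lambda_1,\lambda_{n+1}$ (and $\|Du\|_\infty$) would be right for the graph integrand $\sqrt{1+p^TAp}$ behind $\mathcal{M}_\Phi$, but that is not $\mathcal{F}_\Phi$.

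The paper's proof does not linearize at all. It transports the problem by $S$ to $\int_{D'}|Dv|\,dy$ and quotes Caffarelli's classification and improvement of flatness there. But $S$ converts $\Phi(Du)$ into $|Dv|$, not $\tfrac12|Dv|^2$, so the transported Euler--Lagrange equation $\divg(Dv/|Dv|)=0$ has the same degeneracy. Switching to the paper's route would not repair this step.

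\textbf{Gap 2: non-degeneracy does not select regular points.} Even granting a uniformly elliptic setting, Step~2 does not reach the theorem from its hypothesis. Quadratic non-degeneracy \eqref{eq:non-degeneracy} does not separate regular from singular points. A polynomial blow-up $\tfrac12y^TQy$ with $Q\ge0$, $\tr Q=1$, also grows like $r^2$. For $\Delta w=f\chi_{\{w>0\}}$ with $f\ge c>0$, non-degeneracy holds at \emph{every} free boundary point, e.g.\ $w=|x|^2/(2(n+1))$, whose contact set is a single point.

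You saw that non-degeneracy only excludes $w_0\equiv0$, and you substituted positive density of $\Lambda$ at $x_0$. That is an extra hypothesis, not something you can extract from \eqref{eq:non-degeneracy}, so the reduction you postpone cannot be carried out. As written, you prove the $C^{1,\alpha}$ conclusion under a different assumption. The paper's Lemma~\ref{lem:regular-blowup} asserts that non-degeneracy rules out polynomial blow-ups, relying on the $o(r^2)$ dichotomy in its Theorem~\ref{thm:caffarelli}; you were right not to lean on it.

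\textbf{Smaller points.}
\begin{itemize}
\item With $\psi\in C^{1,1}$ only, $f=-a^{ij}\partial_{ij}\psi$ is merely $L^\infty$. So neither $f(x_0)$ nor a Lipschitz right-hand side is available.
\item Convexity of $\psi$ gives $f\le0$, since $a^{ij}\ge0$. That is the wrong sign, and together with your Step~1 it would show that no non-degenerate points exist.
\item Kinderlehrer--Nirenberg give a $C^{m+1,\alpha}$ free boundary for a $C^{m,\alpha}$ right-hand side. So $f\in C^{k-2,\alpha}$ yields $\Gamma\in C^{k-1,\alpha}$, not $C^{k,\alpha}$.
\end{itemize}
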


The proof proceeds by blow-up analysis. The Cahn-Hoffman transform relates the
blow-up limits of the anisotropic problem to those of the classical obstacle
problem, for which the classification is complete. Non-degeneracy ensures that
the blow-up is a half-plane solution, which implies $C^{1,\alpha}$-regularity
via the ``flatness implies smoothness'' principle.

Our third result concerns the structure of the singular set.

\begin{theorem}[Structure and rectifiability of the singular set]
\label{thm:singular-set}
Let $\Sigma \subset \Gamma$ denote the set of singular free boundary points,
i.e., points where $\Gamma$ is not locally $C^1$. Then
\begin{equation}\label{eq:singular-dimension}
\mathcal{H}^{n-1}(\Sigma) = 0.
\end{equation}
Moreover, $\Sigma$ is $(n-1)$-rectifiable: there exist countably many
$C^1$ maps $f_i : \mathbb{R}^{n-1} \to \mathbb{R}^{n+1}$ such that
$\mathcal{H}^{n-1}\!\left(\Sigma \setminus \bigcup_i f_i(\mathbb{R}^{n-1})\right) = 0$.
\end{theorem}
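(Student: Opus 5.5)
The plan is to reduce everything to the classical obstacle problem via Proposition~\ref{prop:S-properties}, and then run the stratification and epiperimetric machinery. A structural input from the convexity of the obstacle is then used to kill the top strata. Rectifiability is then automatic, because a set of $\mathcal{H}^{n-1}$-measure zero is trivially covered, up to a null set, by any single $C^1$ image. So the real content of Theorem~\ref{thm:singular-set} is \eqref{eq:singular-dimension}, and I organize the argument around it.

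\emph{Step 1: reduction and stratification.} Via $S$ and Theorem~\ref{thm:existence-regularity}, the function $w=(u-\psi)\circ S^{-1}$ is $C^{1,1}$ near $\Gamma$, has quadratic growth \eqref{eq:growth-estimate}, and solves an obstacle problem $\Delta w = f\,\chi_{\{w>0\}}$ with $f>0$ Lipschitz. Here we use the linearization of the Euler--Lagrange operator at $u$ together with $C^{1,1}$ regularity of the coefficients.

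Singular points are those where the contact set has zero density. Weiss and Monneau monotonicity give unique blow-ups $p_{x_0}(x)=\tfrac12 x^TQx$ with $Q\ge0$ and $\tr Q = f(x_0)$. Set $\Sigma_d=\{x_0\in\Sigma:\dim\ker Q=d\}$ for $d=0,\dots,n$.

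The logarithmic epiperimetric inequality of Colombo--Spolaor--Velichkov, applied to the Weiss energy of $w$, yields a log-rate of convergence to $p_{x_0}$, uniform on compacts. Whitney's extension theorem with the implicit function theorem then covers each $\Sigma_d$ by countably many $C^{1,\log}$ manifolds of dimension $d$. Consequently
\[
\mathcal{H}^{n-1}\Bigl(\textstyle\bigcup_{d\le n-2}\Sigma_d\Bigr)=0 .
\]

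\emph{Step 2: the top strata $\Sigma_{n-1}\cup\Sigma_n$.} This is the main obstacle, since for general obstacles these strata can carry positive $\mathcal{H}^{n-1}$-measure. My first attempt uses the convexity of $\psi$ and the anisotropic Alexandrov reflection principle. Reflecting across hyperplanes in the transformed (isotropic) picture, I aim to show that $\Lambda$ is convex, or at least that $S(\Lambda)$ is locally the intersection of a convex set with $D$.

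Granted this, a zero-density point of a convex set forces $\Lambda$ to have empty interior near $x_0$. Then $\Lambda$ lies, near $x_0$, in a convex set of dimension $\le n$. Its blow-up kernel must contain the affine hull, so the points of $\Sigma_n$ lie in the relative interior of an $n$-dimensional flat convex piece.

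On such a piece $\Delta w = f>0$ off a hyperplane with $w=|Dw|=0$ on it. By Cauchy--Kovalevskaya-type uniqueness, or directly by the strict positivity of $f$ against the one-dimensional profile, this contradicts minimality, since the membrane would lift off. Hence only relative-boundary points can be singular, and these form a set of dimension $\le n-1$. A second application of the same convexity argument to this relative boundary, together with the fact that $\Sigma_{n-1}$ points force the kernel to contain it, pushes the remaining singular points into the relative boundary of an $(n-1)$-dimensional convex set. That relative boundary is $\sigma$-finite for $\mathcal{H}^{n-2}$, which gives $\mathcal{H}^{n-1}(\Sigma_{n-1}\cup\Sigma_n)=0$.

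\emph{Step 3: conclusion.} Combining Steps 1 and 2 gives \eqref{eq:singular-dimension}. Rectifiability follows from the $C^{1,\log}$ covering of Step 1, or trivially from the measure-zero statement.

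The point I expect to be hardest, and possibly to require an extra hypothesis, is the convexity of $\Lambda$ in Step 2. If reflection fails, I would instead try to use the non-degeneracy \eqref{eq:non-degeneracy} quantitatively, combining Monneau's monotonicity with a frequency argument to exclude $\Sigma_n$ and $\Sigma_{n-1}$ directly.
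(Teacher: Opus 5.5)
The gap is in Step~2. It is not only that the convexity of $\Lambda$ is unproved: the argument you run after granting it is false. A flat $n$-dimensional piece of the contact set gives no contradiction with minimality. Take $w(x)=\tfrac f2(x\cdot e)^2$ with $f>0$ constant. It is smooth and nonnegative, $\Delta w=f=f\chi_{\{w>0\}}$ a.e., and it vanishes with its gradient exactly on $e^\perp$. Every point of $e^\perp$ has zero contact density and blow-up $\tfrac f2(x\cdot e)^2$, so it lies in your $\Sigma_n$. Since $w$ solves $\Delta w=f$ across the hyperplane, nothing can lift off. Uniqueness for the Cauchy problem only identifies $w$ with this profile, and the positivity of $f$ is exactly what keeps it admissible.

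The ``second application'' fails the same way. The function $w=\tfrac f4(x_1^2+x_2^2)$ has contact set the $(n-1)$-plane $\{x_1=x_2=0\}$. Its points lie in $\Sigma_{n-1}$, and $\Gamma$ is not a $C^1$ hypersurface there, so they are singular under either definition. Both configurations occur in the setting of the theorem. On $D=B_1$ put $\psi=-w-\epsilon\phi$, with $\phi\ge0$ smooth and $\{\phi=0\}=\bar B_{1/2}$. Then $u\equiv0$ is admissible and is the unconstrained minimizer of $\mathcal F_\Phi$ (and of the Dirichlet or area-type variants), hence the solution. Moreover $\Lambda=\{w=0\}\cap\bar B_{1/2}$ is convex and \eqref{eq:non-degeneracy} holds. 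So neither the reflection route nor your Monneau/frequency fallback can exclude these examples, and the second one has $\mathcal H^{n-1}(\Sigma)>0$.

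Convexity of the obstacle is also incompatible with your Step~1. Since $\tilde\psi=\psi\circ A^{1/2}$ is convex, $f=-\Delta\tilde\psi\le0$ (likewise for any elliptic linearization), which contradicts $f>0$. If convexity is really assumed, as in the abstract, it enters trivially rather than through reflection. For $x\in D$, the component containing $x$ of any line through $x$ intersected with $D$ is a segment with endpoints $a,b\in\partial D$, so $\psi(x)\le\max\{\psi(a),\psi(b)\}<0$. Hence $u\equiv0$ is the unique minimizer, $\Lambda=\emptyset$ and $\Sigma=\emptyset$.

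You are right that the top strata carry the whole difficulty, and right that the covering statement is automatic once \eqref{eq:singular-dimension} holds. The paper's proof does not deal with those strata. It transfers the bound recorded in Theorem~\ref{thm:caffarelli} through the bi-Lipschitz map $S$. It then proves rectifiability separately via the stratification of Section~\ref{sec:singular-optimal}, which is essentially your Step~1. But the classical theory only places $\Sigma\subset\mathbb R^{n+1}$ in countably many $n$-dimensional $C^1$ manifolds. The second example above meets the standing hypotheses of the theorem and violates \eqref{eq:singular-dimension}. So an extra hypothesis is indispensable, and the one you reach for makes the statement vacuous.
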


The proof of the dimension bound uses Federer's dimension reduction adapted to
the anisotropic setting via the Cahn-Hoffman transform. Rectifiability follows
from a logarithmic epiperimetric inequality at singular points, which we establish
in Section~\ref{sec:singular-optimal} by transferring the result of Figalli and
Serra \cite{FigalliSerra2019} through the transform.

\subsection{Relation to existing literature}

The anisotropic obstacle problem has been considered primarily for degenerate
anisotropies such as $\Phi(p) = |p|^q$ (the $q$-Laplacian obstacle problem) or for
fully nonlinear operators \cite{Braga2021}. The ellipsoidal case, which is
uniformly elliptic and geometrically natural, had not been systematically studied
in the obstacle problem context.

In spirit, this work is closest to De Philippis and Maggi
\cite{DePhilippisMaggi2015}, who studied anisotropic capillarity problems and
established Young's law for anisotropic contact angles, using the Cahn-Hoffman
map to relate anisotropic and isotropic capillary problems. We extend that
philosophy to obstacle problems, where the free boundary condition is more complex.

The regularity theory for anisotropic minimal surfaces was developed by De Giorgi
\cite{DeGiorgi1961}, Almgren \cite{Almgren1976}, and more recently by De Rosa
and Tione \cite{DeRosaTione2022}, who proved regularity for graphs with bounded
anisotropic mean curvature. The present work addresses the free boundary case,
which had remained open.

\subsection{Organization}

Section~\ref{sec:preliminaries} collects background on anisotropic surface
energies, the Cahn-Hoffman map, and the classical theory of the obstacle problem.
Section~\ref{sec:transformed-problem} introduces the transform $S$ and derives
the equivalent isotropic problem. Sections~\ref{sec:existence},
\ref{sec:free-boundary}, and \ref{sec:singular} prove
Theorems~\ref{thm:existence-regularity}, \ref{thm:free-boundary-regularity},
and \ref{thm:singular-set}, respectively. Section~\ref{sec:singular-optimal}
establishes the epiperimetric inequality and rectifiability. Applications are
discussed in Section~\ref{sec:applications}, and open problems in
Section~\ref{sec:open}.

\section{Preliminaries}
\label{sec:preliminaries}

\subsection{Anisotropic surface energies and the Wulff shape}

Let $\Phi: \mathbb{R}^{n+1} \to [0, \infty)$ be convex and positively homogeneous
of degree one: $\Phi(\lambda p) = \lambda \Phi(p)$ for $\lambda \geq 0$.
The \emph{anisotropic surface energy} of a set $E \subset \mathbb{R}^{n+1}$ with
finite perimeter is
\begin{equation}\label{eq:anisotropic-perimeter}
P_\Phi(E) = \int_{\partial^* E} \Phi(\nu_E) \, d\mathcal{H}^n,
\end{equation}
where $\partial^* E$ is the reduced boundary and $\nu_E$ is the measure-theoretic
outer normal. The \emph{Wulff shape} is the convex body
\begin{equation}\label{eq:wulff-shape-general}
W_\Phi = \bigcap_{\nu \in S^n} \{ x \in \mathbb{R}^{n+1} : x \cdot \nu \leq \Phi(\nu) \},
\end{equation}
which is the unique minimizer of $P_\Phi$ among sets of given volume
(Dinghas \cite{Dinghas1944}, Taylor \cite{Taylor1978}).

For the ellipsoidal anisotropy
\begin{equation}\label{eq:ellipsoidal-anisotropy}
\Phi(p) = \sqrt{p^T A p}, \quad A \in \Sym^+_{n+1}(\mathbb{R}),
\end{equation}
the Wulff shape is the ellipsoid
\begin{equation}\label{eq:wulff-ellipsoid}
W_\Phi = \{ x \in \mathbb{R}^{n+1} : x^T A^{-1} x \leq 1 \}.
\end{equation}
When $A = \diag(\lambda_1, \ldots, \lambda_{n+1})$, the semi-axes of $W_\Phi$
are $\sqrt{\lambda_i}$ along the coordinate directions. It is worth noting that
$W_\Phi$ is determined by $A^{-1}$, not by $A$ itself --- a fact that underlies
the sign conventions throughout this paper.

\subsection{The Cahn-Hoffman map}

For a $C^2$ anisotropy $\gamma : S^n \to \mathbb{R}^+$, the
\emph{Cahn-Hoffman map} $\xi_\gamma : S^n \to \mathbb{R}^{n+1}$ is
\begin{equation}\label{eq:cahn-hoffman-map}
\xi_\gamma(\nu) = D\gamma|_{T_\nu S^n} + \gamma(\nu) \nu.
\end{equation}
If $\bar\gamma : \mathbb{R}^{n+1} \to \mathbb{R}$ is the positively homogeneous
extension of $\gamma$, then $\xi_\gamma(\nu) = D\bar\gamma(\nu)$.

For the ellipsoidal anisotropy \eqref{eq:ellipsoidal-anisotropy}, one computes
directly that $D\bar\Phi(p) = Ap / \sqrt{p^T Ap}$, so
\begin{equation}\label{eq:cahn-hoffman-ellipsoidal}
\xi_\Phi(\nu) = \frac{A\nu}{\Phi(\nu)} = \frac{A\nu}{\sqrt{\nu^T A \nu}}.
\end{equation}
The image $\xi_\Phi(S^n) = \partial W_\Phi$, and $\xi_\Phi$ is a diffeomorphism
from $S^n$ to $\partial W_\Phi$ when $A$ is positive definite.

\begin{lemma}[Properties of the Cahn-Hoffman map]
\label{lem:cahn-hoffman-properties}
Let $\Phi$ be the ellipsoidal anisotropy \eqref{eq:ellipsoidal-anisotropy}.
\begin{enumerate}[label=\textup{(\roman*)}]
\item $\xi_\Phi : S^n \to \partial W_\Phi$ is a diffeomorphism.
\item The inverse is $\xi_\Phi^{-1}(x) = A^{-1}x \, / \sqrt{x^T A^{-2} x}$.
\item The Jacobian satisfies
$J_{\xi_\Phi}(\nu) = \det A \cdot (\nu^T A \nu)^{-(n+2)/2}$.
\end{enumerate}
\end{lemma}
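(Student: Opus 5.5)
The plan is to verify the three items by direct computation with the explicit formula \eqref{eq:cahn-hoffman-ellipsoidal}, $\xi_\Phi(\nu)=A\nu/\Phi(\nu)$, $\Phi(\nu)=\sqrt{\nu^TA\nu}$. The work is elementary linear algebra together with one standard trick (a cone or homogeneous extension) to compute the tangential Jacobian.

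\emph{Items (i) and (ii).} First check that $\xi_\Phi$ maps into $\partial W_\Phi$:
\[
\xi_\Phi(\nu)^TA^{-1}\xi_\Phi(\nu)=\frac{\nu^TA\nu}{\Phi(\nu)^2}=1 .
\]
Next define $\eta(x)=A^{-1}x/|A^{-1}x|$ for $x\in\partial W_\Phi$, noting that $|A^{-1}x|=\sqrt{x^TA^{-2}x}$. Compose in both directions:
\begin{itemize}
\item $A^{-1}\xi_\Phi(\nu)=\nu/\Phi(\nu)$, so $\eta(\xi_\Phi(\nu))=\nu$.
\item For $x\in\partial W_\Phi$ put $\nu=\eta(x)$. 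Then $A\nu=x/|A^{-1}x|$ and $\nu^TA\nu=x^TA^{-1}x/|A^{-1}x|^2=1/|A^{-1}x|^2$. Hence $\xi_\Phi(\nu)=x$.
\end{itemize}
Both maps are restrictions of smooth maps on $\mathbb{R}^{n+1}\setminus\{0\}$, because $A$ is positive definite and $\Phi>0$ there. So $\xi_\Phi$ is a diffeomorphism with inverse $\eta$, which is the formula in (ii).

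\emph{Item (iii).} This is the only step needing care. The key geometric fact is that the outer unit normal to $\partial W_\Phi$ at $\xi_\Phi(\nu)$ is $\nu$ itself. Indeed, the gradient of $x^TA^{-1}x$ is $2A^{-1}x$, which at $x=\xi_\Phi(\nu)$ equals $2\nu/\Phi(\nu)$. Consequently $\xi_\Phi(\nu)\cdot\nu=\Phi(\nu)$.

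Consider the degree-one homogeneous extension
\[
H(p)=|p|\,\frac{Ap}{\Phi(p)},\qquad p\in\mathbb{R}^{n+1}\setminus\{0\}.
\]
At $p=\nu\in S^n$ its differential maps the radial direction $\nu$ to $\xi_\Phi(\nu)$, and maps $T_\nu S^n$ onto $T_{\xi_\Phi(\nu)}\partial W_\Phi$ via $d\xi_\Phi$. Comparing the volume forms gives
\[
\det DH(\nu)=J_{\xi_\Phi}(\nu)\,\bigl(\xi_\Phi(\nu)\cdot\nu\bigr)=J_{\xi_\Phi}(\nu)\,\Phi(\nu).
\]

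It remains to compute $\det DH(\nu)$. Differentiating at $|p|=1$, using $D|p|=p^T$ and $D\Phi(p)=(Ap)^T/\Phi$, gives
\[
DH(\nu)=\frac{A}{\Phi}\Bigl[I+\nu\Bigl(\nu-\frac{A\nu}{\Phi^2}\Bigr)^T\Bigr].
\]
The bracket is a rank-one perturbation of the identity. By the matrix determinant lemma its determinant is
\[
1+|\nu|^2-\frac{\nu^TA\nu}{\Phi^2}=1 .
\]
Hence $\det DH(\nu)=\det A\,\Phi(\nu)^{-(n+1)}$, and therefore $J_{\xi_\Phi}(\nu)=\det A\,(\nu^TA\nu)^{-(n+2)/2}$.

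\emph{Main obstacle.} The delicate point is justifying the volume-form identity $\det DH=J\cdot(\xi\cdot N)$. I would argue as follows. Choose an orthonormal basis $\nu,e_1,\dots,e_n$ with $e_i\in T_\nu S^n$. Then $DH(\nu)$ sends $e_i$ to tangent vectors of $\partial W_\Phi$ and sends $\nu$ to $\xi_\Phi(\nu)$. Only the normal component $\xi_\Phi(\nu)\cdot\nu$ of the image of $\nu$ contributes to the $(n+1)$-volume. Orientations agree since $\det A>0$.
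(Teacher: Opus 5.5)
Your proposal is correct. For (i)--(ii) you follow the same direct linear-algebra route as the paper, but more carefully. Checking both $\eta\circ\xi_\Phi=\mathrm{id}$ and $\xi_\Phi\circ\eta=\mathrm{id}$ gives surjectivity and smoothness of the inverse at once, whereas the paper proves injectivity and calls surjectivity immediate. Your identity $|A^{-1}x|=1/\Phi(\nu)$ is also the correct form of the paper's intermediate step, which mistakenly writes $|A^{-1}x|^2=\nu^TA^{-2}\nu/\Phi(\nu)^2$.

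The genuine difference is (iii). There the paper gives no argument and only cites Koiso--Palmer, while you supply a self-contained computation. It rests on two ingredients. First, the normal to $\partial W_\Phi$ at $\xi_\Phi(\nu)$ is $\nu$, so $d\xi_\Phi(\nu)$ maps the hyperplane $\nu^\perp$ to itself and $J_{\xi_\Phi}(\nu)=\bigl|\det\bigl(d\xi_\Phi(\nu)|_{\nu^\perp}\bigr)\bigr|$. Second, Euler's relation $DH(\nu)\nu=\xi_\Phi(\nu)$ holds for your $1$-homogeneous extension.

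The step you flag as delicate closes by multilinearity. Write $\xi_\Phi(\nu)=\Phi(\nu)\nu+t$ with $t\in\nu^\perp$. The determinant with columns $d\xi_\Phi(\nu)e_1,\dots,d\xi_\Phi(\nu)e_n,t$ vanishes, since these are $n+1$ vectors in an $n$-dimensional space. No orientation argument is needed either: you take absolute values, and $\det DH(\nu)=\det A\,\Phi(\nu)^{-(n+1)}>0$ anyway.

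The other standard route views $\xi_\Phi$ as the inverse Gauss map of the ellipsoid and quotes $J=1/K$. Compared with that, yours needs no second fundamental form, only homogeneity and the matrix determinant lemma, and it independently confirms the exponent $-(n+2)/2$ in the stated formula.
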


\begin{proof}
(i) Positive definiteness of $A$ implies $\Phi(\nu) > 0$ on $S^n$, so $\xi_\Phi$
is smooth. If $\xi_\Phi(\nu_1) = \xi_\Phi(\nu_2)$, then $A\nu_1/\Phi(\nu_1) =
A\nu_2/\Phi(\nu_2)$; applying $A^{-1}$ and using $|\nu_i| = 1$ gives $\nu_1 = \nu_2$.
Surjectivity is immediate from the definition of $W_\Phi$.

(ii) From $x = A\nu/\Phi(\nu)$ one gets $A^{-1}x = \nu/\Phi(\nu)$, hence
$|A^{-1}x|^2 = (\nu^T A^{-2}\nu)/\Phi(\nu)^2$. Since $x \in \partial W_\Phi$
means $x^T A^{-1}x = 1$, i.e., $\Phi(\nu)^2 = \nu^T A\nu$, we find
$\xi_\Phi^{-1}(x) = A^{-1}x\,/\sqrt{x^T A^{-2}x}$.

(iii) Standard computation; see \cite{KoisoPalmer2019}.
\end{proof}

\subsection{Anisotropic mean curvature}

For a smooth hypersurface $\Sigma$ with unit normal $\nu$, the
\emph{anisotropic mean curvature} is given by
\begin{equation}\label{eq:anisotropic-mc}
H_\Phi = \divg_\Sigma\!\left(\frac{A\nu}{\Phi(\nu)}\right).
\end{equation}
A surface with $H_\Phi = 0$ is \emph{$\Phi$-minimal}. For graphs $u : D \to
\mathbb{R}$, the associated operator is
\begin{equation}\label{eq:anisotropic-mc-operator}
\mathcal{M}_\Phi(u) = \divg\!\left(\frac{A \, Du}{\sqrt{1 + Du^T A \, Du}}\right).
\end{equation}

\subsection{The classical obstacle problem}

For the reader's convenience we recall the main results for the isotropic obstacle
problem. Given $\psi \in C^{1,1}(\bar{D})$ with $\psi < 0$ on $\partial D$, the
classical problem minimizes
\begin{equation}\label{eq:classical-obstacle}
\min_{u \geq \psi} \int_D \!\left(\tfrac{1}{2}|Du|^2 + fu\right) dx
\end{equation}
over $u = 0$ on $\partial D$.

\begin{theorem}[Caffarelli \cite{Caffarelli1977, Caffarelli1998}]
\label{thm:caffarelli}
Let $u$ be the solution of \eqref{eq:classical-obstacle}. Then
$u \in C^{1,1}_{\loc}(D)$. At each free boundary point $x_0 \in \Gamma$, exactly
one of the following holds:
\begin{enumerate}
\item[(a)] \textup{(Regular point)} $\sup_{B_r(x_0)} u \geq c r^2$ for small
$r$, and $\Gamma$ is $C^{1,\alpha}$ near $x_0$.
\item[(b)] \textup{(Singular point)} $\sup_{B_r(x_0)} u = o(r^2)$, and the
blow-up is a homogeneous quadratic polynomial.
\end{enumerate}
The singular set $\Sigma$ satisfies $\mathcal{H}^{n-1}(\Sigma) = 0$.
\end{theorem}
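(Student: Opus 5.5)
The plan is to pass to the difference $w = u - \psi \geq 0$, which satisfies (in the distributional sense, once $u$ is known to be $C^{1,1}$) the normalized equation $\Delta w = g\,\chi_{\{w>0\}}$ with $g = f - \Delta\psi \in L^\infty$. Throughout I assume the standard sign condition $g \geq c_0 > 0$ near $\Gamma$; without it nondegeneracy, and hence the dichotomy, fails. I would prove the three assertions of Theorem~\ref{thm:caffarelli} in the order written: interior $C^{1,1}$ regularity, then the regular/singular dichotomy at each $x_0 \in \Gamma$, then the size of $\Sigma$.

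\textbf{Step 1: $C^{1,1}$ regularity.}
\begin{itemize}
\item First, $w$ is continuous and $\Delta w \leq \|g\|_\infty$, via the Lewy--Stampacchia inequality.
\item Quadratic growth: if $x_0 \in \Gamma$, then $\sup_{B_r(x_0)} w \leq C r^2$. I would get this by Harnack's inequality applied to $w$ minus the particular solution $\|g\|_\infty |x-x_0|^2/(2(n+1))$, using $w(x_0) = |Dw(x_0)| = 0$.
\item In $\{w>0\}$ the equation is linear, so interior Schauder/Calder\'on--Zygmund estimates on balls of radius comparable to $\dist(x,\Gamma)$, rescaled by the growth bound, give $|D^2 w| \leq C$ uniformly.
\end{itemize}

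\textbf{Step 2: the dichotomy.} Nondegeneracy, $\sup_{B_r(x_0)} w \geq c r^2$, holds at every free boundary point when $g \geq c_0$. It follows from the maximum principle applied to $w(x) - \tfrac{c_0}{2(n+1)}|x-y|^2$ for $y \in \{w>0\}$ close to $x_0$. So the real alternative is read off from the blow-ups $w_r(x) = w(x_0+rx)/r^2$.
\begin{itemize}
\item The family $w_r$ is bounded in $C^{1,1}$, so it is compact.
\item Weiss' monotonicity formula shows that every limit is $2$-homogeneous.
\item Caffarelli's classification says every such global solution is either a half-space solution $\tfrac{g(x_0)}{2}((x\cdot e)_+)^2$ or a nonnegative quadratic polynomial $p$ with $\Delta p = g(x_0)$.
\end{itemize}
Case (a) is the half-space blow-up. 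There the contact set has positive density at $x_0$, and the ``flatness implies $C^{1,\alpha}$'' argument applies: directional monotonicity $C\partial_e w - w \geq 0$ in a cone of directions, giving a Lipschitz graph, then boundary Harnack to upgrade to $C^{1,\alpha}$. Case (b) is the polynomial blow-up. Here the contact set has zero density at $x_0$. Monneau's monotonicity formula then gives uniqueness of the blow-up $p_{x_0}$ together with continuous dependence on $x_0$. I would record case (b) in the form the statement uses, namely that the blow-up is a homogeneous quadratic polynomial.

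\textbf{Step 3: size of $\Sigma$.} Stratify $\Sigma$ as $\Sigma_k = \{x_0 : \dim\ker D^2 p_{x_0} = k\}$ for $k = 0,\dots,n$. The continuity of $x_0 \mapsto p_{x_0}$ from Step 2, combined with Whitney's extension theorem and the implicit function theorem, places each $\Sigma_k$ inside a countable union of $k$-dimensional $C^1$ manifolds.

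This is the step I expect to be the main obstacle. Because the ambient space is $\mathbb{R}^{n+1}$, the strata $\Sigma_{n-1}$ and $\Sigma_n$ are not $\mathcal{H}^{n-1}$-null by the covering argument alone. To conclude $\mathcal{H}^{n-1}(\Sigma) = 0$ I would try the following, in order:
\begin{itemize}
\item Use the Figalli--Serra improvement, namely the quantitative rate $\|w_r - p_{x_0}\| \leq C(\log 1/r)^{-\epsilon}$ from the logarithmic epiperimetric inequality.
\item Combine it with a cleaning lemma to show that near points of $\Sigma_n$ and $\Sigma_{n-1}$ the contact set is trapped in a thin slab. This should make these top strata lie in $C^{1,\log}$ manifolds, on which an additional barrier argument forces the points to be isolated in $\mathcal{H}^{n-1}$-measure.
\end{itemize}
If this fails, the honest fallback is the bound $\dim_{\mathcal{H}}\Sigma \leq n$ with $(n-1)$-dimensional strata rectifiable. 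In that case the measure claim in the theorem would have to be read with the free boundary dimension counted in the ambient dimension $n$ rather than $n+1$.
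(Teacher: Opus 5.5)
The paper gives no argument for this statement beyond the citation to Caffarelli. Your Steps 1--2 are a sound outline of that standard theory (Weiss homogeneity, classification of blow-ups, Monneau uniqueness). The trouble is that the statement as written is not what this theory yields, and where the two disagree you bend the argument instead of flagging it.

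\textbf{The dichotomy.} Your own nondegeneracy lemma gives $\sup_{B_r(x_0)}(u-\psi)\ge cr^2$ at \emph{every} $x_0\in\Gamma$ once $g\ge c_0$. So the clause $\sup_{B_r(x_0)}(u-\psi)=o(r^2)$ in (b) is never satisfied. The dichotomy as written therefore collapses to ``every free boundary point satisfies (a)'', i.e.\ $\Gamma$ is a $C^{1,\alpha}$ hypersurface near every one of its points. That is false, as the rim points $|x'|=1$ in the example below show. Recording (b) ``in the form the statement uses'' silently discards half of it. The correct criterion is the one you mention in passing: positive versus zero density of $\Lambda$ at $x_0$.

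\textbf{Regularity of $g$.} With only $\psi\in C^{1,1}$, $g=f-\Delta\psi$ is merely $L^\infty$. Interior Schauder or Calder\'on--Zygmund estimates then do not bound $D^2w$; you must apply them to $u$, which requires something like $f\in C^{\alpha}$. Likewise, passing to $\Delta w_0=g(x_0)\chi_{\{w_0>0\}}$ in the blow-up needs $g$ continuous at $x_0$. So a regularity hypothesis on $g$ has to be added alongside $g\ge c_0$.

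\textbf{Step 3.} This step cannot be completed by any argument, because $\mathcal{H}^{n-1}(\Sigma)=0$ is false. Take:
\begin{itemize}
\item $D=B_2\subset\mathbb{R}^{n+1}$ and $f\equiv0$;
\item $\psi(x)=-\tfrac12x_1^2-\eta(|x'|)$, with $x'=(x_2,\dots,x_{n+1})$ and $\eta$ smooth, convex, nondecreasing, vanishing exactly on $[0,1]$.
\end{itemize}
Then $\psi<0$ on $\partial D$, and $u\equiv0$ is the unique minimizer, since it is admissible and the energy is nonnegative. Moreover $g=-\Delta\psi\ge1$, and $\Lambda=\Gamma=\{x_1=0,\ |x'|\le1\}$. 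At every point of $\Gamma$ the blow-up of $w=u-\psi$ is $\tfrac12x_1^2$. Hence $\Sigma=\Gamma=\Sigma_n$ is an $n$-dimensional disc, and $\mathcal{H}^{n-1}(\Sigma)=\infty$.

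The same construction in $\mathbb{R}^n$ gives $\mathcal{H}^{n-1}(\Sigma)>0$, so recounting the ambient dimension does not rescue the claim. Nor can any combination of the logarithmic epiperimetric inequality, cleaning lemmas and barriers make the top strata null: those tools improve the regularity of the manifolds covering $\Sigma_k$, not their size.

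The correct endpoint is exactly your covering result: $\Sigma$ lies in countably many $C^1$ manifolds of dimension at most $n$, so $\dim_{\mathcal H}\Sigma\le n$ and $\Sigma$ is $n$-rectifiable. The example shows this is sharp.
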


\section{The Transformed Problem}
\label{sec:transformed-problem}

\subsection{The Cahn-Hoffman transform and its action on the Wulff shape}

Define the linear map
\begin{equation}\label{eq:S-def}
S : \mathbb{R}^{n+1} \to \mathbb{R}^{n+1}, \quad S(x) = A^{-1/2} x.
\end{equation}
Since $A$ is symmetric positive definite, $S$ is invertible with
$S^{-1}(y) = A^{1/2} y$.

\begin{proposition}[Properties of $S$]\label{prop:S-properties}
The transformation $S(x) = A^{-1/2}x$ satisfies:
\begin{enumerate}[label=\textup{(\roman*)}]
\item $S(W_\Phi) = B_1(0)$, the Euclidean unit ball.
\item For any measurable $E \subset \mathbb{R}^{n+1}$,
\begin{equation}\label{eq:volume-transform}
|S(E)| = \frac{1}{\sqrt{\det A}} \, |E|.
\end{equation}
\item For $u \in W^{1,1}(D)$, set $v(y) = u(S^{-1}y) = u(A^{1/2}y)$ and
$D' = S(D)$. Then
\begin{equation}\label{eq:gradient-relation}
Dv(y) = A^{1/2} Du(x)\big|_{x = A^{1/2}y},
\end{equation}
and in particular $|Dv(y)| = \Phi(Du(x))$.
\end{enumerate}
\end{proposition}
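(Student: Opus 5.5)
The plan is to verify the three items directly from linear algebra and the change-of-variables formula. The only facts needed are that $A^{1/2}$ and $A^{-1/2}$ are symmetric positive definite, commute with $A$, and satisfy $(A^{-1/2})^2 = A^{-1}$.

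For (i), I will use the description \eqref{eq:wulff-ellipsoid} of $W_\Phi$. For $x \in \mathbb{R}^{n+1}$ put $y = S(x) = A^{-1/2}x$. By symmetry of $A^{-1/2}$,
\[
|y|^2 = x^T A^{-1/2}A^{-1/2}x = x^T A^{-1}x .
\]
Hence $x \in W_\Phi$ if and only if $|S(x)| \le 1$. Since $S$ is a bijection with inverse $y \mapsto A^{1/2}y$, this gives $S(W_\Phi) = \bar B_1(0)$, i.e. the closed unit ball, which is what ``$B_1(0)$'' must mean here since $W_\Phi$ is defined with $\le$. I will also note in passing that $S(\partial W_\Phi) = S^n$.

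For (ii), $S$ is linear with $\det S = \det A^{-1/2} = (\det A)^{-1/2} > 0$, because the eigenvalues of $A^{-1/2}$ are $\lambda_i^{-1/2}$. The linear change-of-variables formula for Lebesgue measure, $|S(E)| = |\det S|\,|E|$, then gives \eqref{eq:volume-transform}. Measurability of $S(E)$ is automatic, since $S$ is a linear homeomorphism and maps null sets to null sets.

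For (iii), I will first treat $u \in C^1(D)$. There the chain rule applied to $v = u \circ A^{1/2}$ gives $Dv(y) = (A^{1/2})^T Du(A^{1/2}y) = A^{1/2}Du(x)$. Then
\[
|Dv(y)|^2 = Du(x)^T A\, Du(x) = \Phi(Du(x))^2,
\]
which is the identity $|Dv| = \Phi(Du)$.

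The only genuinely non-formal point is extending this to $u \in W^{1,1}(D)$. I would do this in one of two ways:
\begin{itemize}
\item Approximate $u$ by smooth functions $u_k \to u$ in $W^{1,1}_{\loc}(D)$ (Meyers--Serrin). By (ii), composition with the bi-Lipschitz linear map $A^{1/2}$ is continuous from $L^1(D)$ to $L^1(D')$, with constant $(\det A)^{-1/2}$ from the Jacobian. So $v_k = u_k\circ A^{1/2} \to v$ and $Dv_k = A^{1/2}Du_k\circ A^{1/2} \to A^{1/2}Du\circ A^{1/2}$ in $L^1_{\loc}(D')$, which identifies the weak gradient of $v$.
\item Equivalently, verify the weak-derivative identity directly. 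For a test function $\varphi \in C_c^\infty(D')$, substitute $x = A^{1/2}y$ in $\int v\,\partial_j\varphi\,dy$, use that $\varphi\circ S$ is a valid test function on $D$, and compute its gradient by the smooth chain rule.
\end{itemize}
Either way, the formula holds a.e. in $D'$, so $v \in W^{1,1}(D')$ and \eqref{eq:gradient-relation} holds a.e.; this is the step requiring the most care, though it is standard.
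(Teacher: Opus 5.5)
Your proposal is correct and follows the paper's proof essentially line by line: the identity $|A^{-1/2}x|^2 = x^T A^{-1} x$ for (i), the linear change of variables with Jacobian $(\det A)^{-1/2}$ for (ii), and the chain rule with $A^{1/2}A^{1/2} = A$ for (iii). Your extra remarks are sound and supply details the paper leaves implicit: the image in (i) is really the closed ball, and the chain rule extends to $W^{1,1}$ functions by approximation or by testing against $\varphi \circ S$.
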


\begin{proof}
(i) Let $y = A^{-1/2}x$, so $x = A^{1/2}y$. Then
\[
x \in W_\Phi \iff x^T A^{-1} x \leq 1
\iff y^T A^{1/2} A^{-1} A^{1/2} y \leq 1
\iff y^T y \leq 1 \iff y \in B_1(0).
\]

(ii) The Jacobian of $S(x) = A^{-1/2}x$ is $\det(A^{-1/2}) = (\det A)^{-1/2}$,
so $dy = (\det A)^{-1/2} dx$, and
\[
|S(E)| = \int_{S(E)} dy = \int_E (\det A)^{-1/2} \, dx = \frac{|E|}{\sqrt{\det A}}.
\]

(iii) With $v(y) = u(A^{1/2}y)$, the chain rule gives
$Dv(y) = A^{1/2} Du(A^{1/2}y)$. Consequently,
\[
|Dv(y)|^2 = Du(x)^T A^{1/2} A^{1/2} Du(x) = Du(x)^T A \, Du(x) = \Phi(Du(x))^2,
\]
yielding $|Dv(y)| = \Phi(Du(x))$.
\end{proof}

\begin{remark}
A common source of confusion in the literature is the choice between $A^{1/2}$
and $A^{-1/2}$ for the transform. The convention $S(x) = A^{-1/2}x$ is the
correct one: it maps the Wulff shape (defined by $x^T A^{-1} x \leq 1$) to the
unit ball, and simultaneously converts $\Phi(Du)$ to the Euclidean norm $|Dv|$.
The opposite convention $A^{1/2}x$ maps the ball $\{x : x^T x \leq 1\}$ to the
ellipsoid $\{x : x^T A^{-1} x \leq 1\} = W_\Phi$, which is the natural direction
in isoperimetric problems, but not in the functional analysis of $\mathcal{F}_\Phi$.
\end{remark}

\subsection{Transformation of the functional}

Let $u \in W^{1,1}(D)$ and define $v : D' \to \mathbb{R}$ by
\begin{equation}\label{eq:u-to-v}
v(y) = u(A^{1/2} y), \quad D' = A^{-1/2}(D).
\end{equation}

\begin{proposition}[Transformation of the anisotropic functional]
\label{prop:functional-transform}
With the notation above,
\begin{equation}\label{eq:functional-transform}
\int_D \Phi(Du(x)) \, dx = \sqrt{\det A} \int_{D'} |Dv(y)| \, dy.
\end{equation}
\end{proposition}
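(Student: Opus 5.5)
The identity is a linear change of variables combined with the pointwise gradient identity of Proposition~\ref{prop:S-properties}(iii). I would carry it out in three steps.

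\emph{Step 1: pointwise identity.} For $u \in W^{1,1}(D)$ put $v(y) = u(A^{1/2}y)$ on $D' = A^{-1/2}(D)$. Proposition~\ref{prop:S-properties}(iii) gives, for a.e.\ $y \in D'$,
\[
|Dv(y)| = \Phi\bigl(Du(A^{1/2}y)\bigr).
\]
Before using it I will check that $v \in W^{1,1}(D')$ and that the chain rule holds a.e. Composition with an invertible linear map preserves $W^{1,1}$, and the weak gradient transforms by $(A^{1/2})^T = A^{1/2}$ because $A^{1/2}$ is symmetric. To verify this, either approximate $u$ by smooth functions in $W^{1,1}_{\loc}$, or test against $\varphi \in C_c^\infty(D')$ and change variables in the integration-by-parts identity. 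The a.e.\ statements transfer because linear isomorphisms map null sets to null sets.

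\emph{Step 2: change of variables.} Substitute $x = A^{1/2}y$ in the left-hand side. The Jacobian is $dx = \det(A^{1/2})\,dy = \sqrt{\det A}\,dy$, which is the reciprocal form of \eqref{eq:volume-transform}. Hence
\[
\int_D \Phi(Du(x))\,dx = \sqrt{\det A}\int_{D'} \Phi\bigl(Du(A^{1/2}y)\bigr)\,dy = \sqrt{\det A}\int_{D'} |Dv(y)|\,dy .
\]
The integrand is nonnegative and measurable, so the substitution is valid with no integrability hypothesis beyond $u \in W^{1,1}$. Both sides are finite because $\Phi(p) \le \sqrt{\lambda_{n+1}}\,|p|$.

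\emph{Main obstacle.} The only real difficulty is bookkeeping, and two points need care. First, the Jacobian factor must be $\sqrt{\det A}$, not its reciprocal; this factor comes from $x = S^{-1}y$, not from $S$ itself. Second, the weak chain rule must be justified, as done in Step 1.

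I would also remark in passing that the right-hand side is the total-variation-type functional $\int |Dv|$ rather than the area functional. The claim of the paper about the ``perimeter functional'' should therefore be read in that sense.
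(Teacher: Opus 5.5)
Your proposal is correct and follows the same route as the paper: the pointwise identity $|Dv(y)| = \Phi(Du(A^{1/2}y))$ from Proposition~\ref{prop:S-properties}(iii), followed by the linear substitution $x = A^{1/2}y$ with $dx = \sqrt{\det A}\,dy$. Your justification of the weak chain rule for $W^{1,1}$ functions and your check on which way the Jacobian factor goes fill in details the paper leaves implicit; they do not change the argument.
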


\begin{proof}
By Proposition~\ref{prop:S-properties}(iii), $|Dv(y)| = \Phi(Du(x))$ for
$x = A^{1/2}y$. The change of variables $x = A^{1/2}y$ has Jacobian
$\det(A^{1/2}) = \sqrt{\det A}$, so $dx = \sqrt{\det A} \, dy$. Substituting:
\[
\int_D \Phi(Du(x)) \, dx
= \int_{D'} \Phi\!\left(Du(A^{1/2}y)\right) \sqrt{\det A} \, dy
= \sqrt{\det A} \int_{D'} |Dv(y)| \, dy. \qedhere
\]
\end{proof}

\subsection{The transformed obstacle problem}

The obstacle and admissible set transform to
\begin{equation}\label{eq:obstacle-transform}
\tilde\psi(y) = \psi(A^{1/2} y), \quad
\tilde{\mathcal{K}} = \{ v \in W^{1,1}(D') : v \geq \tilde\psi
\text{ a.e.}, \; v = 0 \text{ on } \partial D' \}.
\end{equation}

\begin{proposition}[Equivalence of minimization problems]
\label{prop:equivalence}
A function $u \in \mathcal{K}$ minimizes $\mathcal{F}_\Phi$ over $\mathcal{K}$ if
and only if $v = u \circ A^{1/2} \in \tilde{\mathcal{K}}$ minimizes the isotropic
functional
\begin{equation}\label{eq:isotropic-functional}
\mathcal{F}_0(v) = \int_{D'} |Dv(y)| \, dy
\end{equation}
over $\tilde{\mathcal{K}}$.
\end{proposition}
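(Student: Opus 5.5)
The plan is to show that the change of variables $T: u \mapsto v = u\circ A^{1/2}$ is a bijection from $\mathcal{K}$ onto $\tilde{\mathcal{K}}$ that multiplies the functional by the fixed constant $\sqrt{\det A}>0$. Minimality then transfers in both directions.

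\emph{Step 1: $T$ maps $\mathcal{K}$ bijectively onto $\tilde{\mathcal{K}}$.} The map $x = A^{1/2}y$ is a linear bi-Lipschitz diffeomorphism from $D'$ onto $D$. Composition with it therefore preserves $W^{1,1}$, with the chain rule of Proposition~\ref{prop:S-properties}(iii) valid a.e. It also preserves null sets, so $u \ge \psi$ a.e.\ in $D$ holds if and only if $v \ge \tilde\psi$ a.e.\ in $D'$. The map carries $\partial D'$ onto $\partial D$, and the trace commutes with composition by such a map, so $u = 0$ on $\partial D$ if and only if $v = 0$ on $\partial D'$. The inverse map is $v \mapsto v\circ A^{-1/2}$, and the same argument applies to it.

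\emph{Step 2: the functionals correspond.} For every $u \in \mathcal{K}$, Proposition~\ref{prop:functional-transform} gives
\[
\mathcal{F}_\Phi(u) = \sqrt{\det A}\,\mathcal{F}_0(T u).
\]

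\emph{Step 3: conclude.} Suppose $u$ minimizes $\mathcal{F}_\Phi$ over $\mathcal{K}$, and let $w \in \tilde{\mathcal{K}}$. By Step 1, $w = T\tilde u$ for some $\tilde u \in \mathcal{K}$. Then by Step 2,
\[
\sqrt{\det A}\,\mathcal{F}_0(w) = \mathcal{F}_\Phi(\tilde u) \ge \mathcal{F}_\Phi(u) = \sqrt{\det A}\,\mathcal{F}_0(Tu).
\]
Dividing by $\sqrt{\det A}>0$ shows that $Tu$ minimizes $\mathcal{F}_0$ over $\tilde{\mathcal{K}}$. The converse direction is symmetric.

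The only point requiring care is Step 1, namely that traces and Sobolev membership are preserved under the linear change of variables. This is standard because $A^{1/2}$ is a linear isomorphism and $\partial D' = A^{-1/2}(\partial D)$ is again $C^2$. Everything else is bookkeeping.
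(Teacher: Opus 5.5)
Your proof is correct and follows the paper's argument. The paper deduces the equivalence directly from Proposition~\ref{prop:functional-transform}, noting that the constant factor $\sqrt{\det A}>0$ does not change minimizers under the bijection $u \leftrightarrow v$; your Step~1 simply spells out that this bijection maps $\mathcal{K}$ onto $\tilde{\mathcal{K}}$ (Sobolev membership, a.e.\ constraint, traces), which the paper takes for granted.
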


\begin{proof}
This is immediate from Proposition~\ref{prop:functional-transform}: the factor
$\sqrt{\det A} > 0$ is constant, so the two minimization problems have the same
minimizers under the bijection $u \leftrightarrow v$.
\end{proof}

\subsection{The free boundary condition}

At a smooth point $x_0 \in \Gamma$ of the free boundary, the solution satisfies
an anisotropic condition that, after the transform, takes a Robin form. Let
$y_0 = S(x_0) \in \tilde\Gamma$ and let $\tilde\nu$ be the unit inward normal to
the transformed free set at $y_0$.

\begin{proposition}[Free boundary condition in transformed coordinates]
\label{prop:fb-condition}
At a smooth point of the free boundary, the transformed solution $v$ satisfies
\begin{equation}\label{eq:robin-condition}
\frac{\partial v}{\partial \tilde\nu}(y_0)
= \frac{\partial \tilde\psi}{\partial \tilde\nu}(y_0)
+ \Lambda(y_0) \langle \tilde\nu, y_0 \rangle,
\end{equation}
where $\Lambda(y_0) = \bigl(y_0^T A y_0 - (\tilde\nu^T A \tilde\nu)(y_0^T
y_0)\bigr) / (y_0^T A^{3/2} \tilde\nu)$.
\end{proposition}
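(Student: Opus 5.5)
The plan is to derive \eqref{eq:robin-condition} in two stages. First we obtain the free boundary condition in the original variables $x$ from the first variation of $\mathcal{F}_\Phi$. Then we push it through $S$ using Proposition~\ref{prop:S-properties}(iii) and Proposition~\ref{prop:equivalence}.

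\textbf{Step 1: the condition in the original variables.} Near a smooth point $x_0\in\Gamma$ with unit normal $\nu$ (inward to $\Omega$), the Euler--Lagrange equation in $\Omega$ is the divergence-form equation obtained by differentiating $\Phi$, namely
\[
\divg\bigl(A\,Du/\Phi(Du)\bigr)=0 .
\]
Its flux vector is the Cahn--Hoffman field $\xi_\Phi(Du)$ of \eqref{eq:cahn-hoffman-ellipsoidal}. Across $\Gamma$ we would use two facts:
\begin{enumerate}[label=\textup{(\alpha*)}]
\item the $C^1$-fit $D(u-\psi)(x_0)=0$, which is available from Theorem~\ref{thm:existence-regularity}(iii);
\item a transversality relation coming from \emph{domain} variations $x\mapsto x+t\,\eta(x)$ with $\eta$ supported near $x_0$.
\end{enumerate}
The domain-variation identity has the form
\[
\int_\Gamma \Bigl(\Phi(Du)\,\eta\cdot\nu-\bigl(\xi_\Phi(Du)\cdot\nu\bigr)(Du\cdot\eta)\Bigr)\,d\mathcal H^n \;+\;\text{(matching terms from }\psi\text{ on }\Lambda).
\]
Localizing in $\eta$ gives a pointwise relation between $\xi_\Phi(Du)\cdot\nu$, $Du\cdot\nu$ and $D\psi\cdot\nu$ at $x_0$.

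\textbf{Step 2: transfer to the $y$-variables.} We use $Dv(y)=A^{1/2}Du(x)$ from Proposition~\ref{prop:S-properties}(iii). The normal transforms covariantly: the normal to $\tilde\Gamma=S(\Gamma)$ at $y_0$ is
\[
\tilde\nu=\frac{A^{1/2}\nu}{|A^{1/2}\nu|}, \qquad \text{equivalently}\qquad \nu=\frac{A^{-1/2}\tilde\nu}{|A^{-1/2}\tilde\nu|}.
\]
Substituting into the relation of Step~1 converts the anisotropic conormal $\xi_\Phi(Du)\cdot\nu$ into Euclidean quantities in $\tilde\nu$. The mismatch between $A\nu/\Phi(\nu)$ and $\tilde\nu$ is then rewritten in terms of $y_0$ via $x_0=A^{1/2}y_0$. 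This rewriting should produce the numerator
\[
y_0^TAy_0-(\tilde\nu^TA\tilde\nu)(y_0^Ty_0)
\]
and the denominator $y_0^TA^{3/2}\tilde\nu$ of $\Lambda(y_0)$. The step consists of expanding $x_0^TA^{-1}x_0$-type expressions and collecting terms.

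\textbf{Main obstacle.} The hard part is accounting for the correction term $\Lambda(y_0)\langle\tilde\nu,y_0\rangle$ itself. The $C^1$-fit already gives $Dv(y_0)=D\tilde\psi(y_0)$, since $S$ is linear. Hence $\partial_{\tilde\nu}v(y_0)=\partial_{\tilde\nu}\tilde\psi(y_0)$ outright, and the statement holds only if the correction term vanishes.

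I would therefore first check directly whether $\Lambda(y_0)\langle\tilde\nu,y_0\rangle=0$. It does when $A$ is a multiple of the identity, since then $\tilde\nu^TA\tilde\nu\,y_0^Ty_0=y_0^TAy_0$. It also does at points where $y_0\perp\tilde\nu$. If it does not vanish in general, then the term must encode a second-order (curvature) condition rather than a first-order one. In that case I would redo Step~1 keeping the $O(t^2)$ terms of the domain variation. If that also fails to produce $\Lambda$, the conclusion is that \eqref{eq:robin-condition} is consistent only when $\Lambda(y_0)\langle\tilde\nu,y_0\rangle=0$, and that this case is what the statement asserts.
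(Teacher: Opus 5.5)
\textbf{This is a genuine gap, and it cannot be closed.} The decisive observation is the one you make under \emph{Main obstacle}, and you should carry it to its conclusion instead of hedging. As stated, \eqref{eq:robin-condition} is false whenever $A$ is not a multiple of the identity, so neither your Steps~1--2 nor any other argument can prove it.

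\textbf{The $C^1$ fit pins down the normal derivative.} Within the paper's own framework, Theorem~\ref{thm:existence-regularity}(iii) gives $u-\psi\in C^{1,1}_{\loc}(D)$. Since $u-\psi\ge 0$ attains its minimum $0$ at $x_0$ (which lies in the closed contact set), $D(u-\psi)(x_0)=0$. Because $S$ is linear, $D(v-\tilde\psi)(y_0)=A^{1/2}D(u-\psi)(x_0)=0$. So \eqref{eq:robin-condition} is equivalent to $\Lambda(y_0)\langle\tilde\nu,y_0\rangle=0$, which generically fails. For $A=\diag(1,4)$, $y_0=(1,1)$, $\tilde\nu=e_1$, the numerator is $5-1\cdot 2=3$ and $y_0^TA^{3/2}\tilde\nu=1$, so the correction term equals $3$.

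\textbf{Translation invariance settles it, even without the $C^1$ fit.} Replace $D,\psi$ by $D+h,\ \psi(\cdot-h)$. This translates the minimizer and its free boundary and leaves $Dv$, $D\tilde\psi$ and $\tilde\nu$ at the corresponding point unchanged. It moves $y_0$ to $y_0+A^{-1/2}h$, which is arbitrary. Hence $\Lambda(y)\langle\tilde\nu,y\rangle$ would have to be constant on the cone $\{y^TA^{3/2}\tilde\nu\neq 0\}$. Being homogeneous of degree $2$, it must then vanish there. That forces $y^T\bigl(A-(\tilde\nu^TA\tilde\nu)I\bigr)y=0$ on a dense set, i.e.\ $A=(\tilde\nu^TA\tilde\nu)I$, as soon as a single smooth free boundary point exists.

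\textbf{Where your plan breaks.} A first-variation identity (inner or outer) for the translation-invariant functional $\mathcal{F}_\Phi$ involves only $Du$, $D\psi$ and $\nu$ at $x_0$, never the position $x_0$ itself. So in Step~2 there is nothing to ``rewrite in terms of $y_0$ via $x_0=A^{1/2}y_0$''. Expressions like $x^TA^{-1}x$ belong to points of $\partial W_\Phi$, i.e.\ to the Cahn--Hoffman map acting on normals, not to free boundary points. Your $O(t^2)$ fallback cannot help either: second-order terms constrain curvatures and second derivatives, whereas \eqref{eq:robin-condition} is an identity between first derivatives at $y_0$ that is already determined. Finally, declaring that the statement only ``asserts'' the case $\Lambda(y_0)\langle\tilde\nu,y_0\rangle=0$ changes the proposition rather than proving it.

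\textbf{The paper's sketch does not supply the missing step.} It takes $\Delta v=0$ as the Euler--Lagrange equation of $\mathcal{F}_0(v)=\int_{D'}|Dv|\,dy$, but the correct equation in the free set is $\divg(Dv/|Dv|)=0$. It then writes a domain-variation identity of Dirichlet-energy type, and appeals to ``$H_\Phi=0$ at $x_0$'' and an uncomputed change of normal. No step produces $\Lambda$, and nothing there can override the $C^1$ fit asserted in Theorem~\ref{thm:existence-regularity}. What is actually true is $Dv(y_0)=D\tilde\psi(y_0)$, hence $\partial_{\tilde\nu}v(y_0)=\partial_{\tilde\nu}\tilde\psi(y_0)$, the $A=I$ form of the condition. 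Your \emph{Main obstacle} paragraph already contains that proof.
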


\begin{proof}[Sketch]
In the free set, the Euler-Lagrange equation for $v$ is $\Delta v = 0$. The first
variation of $\mathcal{F}_0$ with respect to domain perturbations supported near
$y_0$ gives
\[
\int_{\tilde\Gamma} \!\left( |Dv|^2 - 2 \frac{\partial v}{\partial\tilde\nu}
\frac{\partial\tilde\psi}{\partial\tilde\nu} \right) \eta \, d\mathcal{H}^n = 0
\]
for all admissible variations $\eta$. Using the relation between $v$ and $u$,
combined with the anisotropic free boundary condition $H_\Phi = 0$ at $x_0$, one
arrives at \eqref{eq:robin-condition} after computing the change in the normal
under $S$. The precise form of $\Lambda$ encodes the discrepancy between the
Euclidean normal $\tilde\nu$ and its anisotropic counterpart under $S^{-1}$.
\end{proof}

\begin{remark}
When $A = I$, the transform is the identity, $\Lambda = 0$, and
\eqref{eq:robin-condition} reduces to the Neumann-type condition
$\partial v/\partial\tilde\nu = \partial\tilde\psi/\partial\tilde\nu$.
For $\tilde\psi = 0$, this is the classical condition $\partial v/\partial\tilde\nu
= 0$, matching the contact angle condition for the isotropic obstacle problem.
\end{remark}

\section{Existence and Regularity of the Solution}
\label{sec:existence}

\subsection{Existence and uniqueness}

\begin{proof}[Proof of Theorem~\ref{thm:existence-regularity}, part (i)]
The functional $\mathcal{F}_\Phi$ is convex and coercive on $W^{1,1}(D)$
because $\Phi$ satisfies
\begin{equation}\label{eq:Phi-bounds} 
\sqrt{\lambda_1} |p| \leq \Phi(p) \leq \sqrt{\lambda_{n+1}} |p|
\quad \text{for all } p \in \mathbb{R}^{n+1}.
\end{equation}
The set $\mathcal{K}$ is closed and convex in $W^{1,1}(D)$. A minimizer exists
by the direct method of the calculus of variations, and is unique because $A$
positive definite implies $\Phi$ is strictly convex.

Interior Lipschitz regularity follows from the De Giorgi-Nash-Moser theory
applied to the Euler-Lagrange equation in the free set, using the uniform
ellipticity \eqref{eq:Phi-bounds}.
\end{proof}

\subsection{$C^{1,1}$-regularity}

\begin{proof}[Proof of Theorem~\ref{thm:existence-regularity}, parts (ii) and (iii)]
By Proposition~\ref{prop:equivalence}, the transformed function $v$ minimizes
$\mathcal{F}_0$ subject to $v \geq \tilde\psi$ on the domain $D'$. Since $D'$ is
the image of a $C^2$ domain under an invertible linear map, it is again $C^2$,
and the boundary regularity theory of Kinderlehrer and Nirenberg 
\cite{KinderlehrerNirenberg1977} applies. The $C^{1,1}$-regularity of $v$ in the
interior and up to the free boundary then follows from
Caffarelli \cite{Caffarelli1977}.

For the growth estimate \eqref{eq:growth-estimate}, fix $x_0 \in \Gamma$ and
define the rescaled functions
\[
u_r(y) = \frac{u(x_0 + ry) - u(x_0)}{r^2}, \quad y \in B_1(0).
\]
The uniform $C^{1,1}$-estimate gives $\|u_r\|_{C^{1,1}(B_{1/2})} \leq C$
independently of $r$. By compactness, every subsequence has a further subsequence
converging to a blow-up limit $u_0$ satisfying the global anisotropic obstacle
problem with the quadratic obstacle $\frac{1}{2} y^T D^2\psi(x_0) y$. Caffarelli's
classification \cite{Caffarelli1977} identifies $u_0(y) = \frac{1}{2}(y\cdot e)_+^2$
for some direction $e$, which gives \eqref{eq:growth-estimate}.
\end{proof}

\section{Regularity of the Free Boundary}
\label{sec:free-boundary}

\subsection{Blow-up analysis}

Let $x_0 \in \Gamma$ be non-degenerate. The blow-up sequence
\begin{equation}\label{eq:blow-up}
u_r(y) = \frac{u(x_0 + ry) - u(x_0)}{r^2}, \quad y \in B_1(0),
\end{equation}
is precompact in $C^{1,\alpha}_{\loc}$ by the uniform $C^{1,1}$ bound.

\begin{lemma}[Compactness of blow-ups]\label{lem:blowup-compactness}
Every sequence $r_j \to 0$ has a subsequence such that $u_{r_j} \to u_0$ in
$C^{1,\alpha}_{\loc}(B_1(0))$ for all $\alpha \in (0,1)$, where $u_0$ is a global
solution of the anisotropic obstacle problem with obstacle
$\psi_0(y) = \frac{1}{2} y^T D^2\psi(x_0) y$.
\end{lemma}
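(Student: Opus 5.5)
The plan is the standard compactness-plus-stability argument for obstacle-problem blow-ups. Throughout I take as given the uniform $C^{1,1}$ bound from Theorem~\ref{thm:existence-regularity}(ii)--(iii).

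\textbf{Step 1: uniform bounds.} Since $x_0\in\Gamma$, we have $u(x_0)=\psi(x_0)$ and $Du(x_0)=D\psi(x_0)$, because $u-\psi\ge 0$ attains an interior minimum there and $u-\psi\in C^{1,1}$. It is cleaner to blow up $w=u-\psi$ rather than $u$, so I set $w_r(y)=w(x_0+ry)/r^2$. The growth estimate \eqref{eq:growth-estimate} gives $0\le w_r(y)\le C|y|^2$, and the $C^{1,1}$ bound gives $\|D^2 w_r\|_{L^\infty(B_R)}\le C$ for every fixed $R$, once $r$ is small enough that $B_{rR}(x_0)\subset D$. For $u_r$ itself, write
\[
u_r(y)=w_r(y)+\frac{\psi(x_0+ry)-\psi(x_0)}{r^2}.
\]
The $\psi$-part equals $r^{-1}D\psi(x_0)\cdot y+\psi_0(y)+o(1)$ when $\psi$ is twice differentiable at $x_0$; in general one only gets a $C^{1,1}$-bounded quadratic remainder. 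The linear term $r^{-1}D\psi(x_0)\cdot y$ diverges, so the statement should be read modulo this affine part, i.e.\ as convergence of $u_r-r^{-1}D\psi(x_0)\cdot y$. I will note this normalization explicitly and carry out the proof for $w_r$.

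\textbf{Step 2: compactness.} By Arzel\`a--Ascoli and the compact embedding $C^{1,1}(B_R)\hookrightarrow C^{1,\alpha}(B_R)$, I extract a diagonal subsequence over $R\to\infty$ along which $w_{r_j}\to w_0$ in $C^{1,\alpha}_{\loc}$ for every $\alpha<1$. To get $\psi_0$ I need $D^2\psi(x_0)$ to exist, so I assume $x_0$ is a point of twice differentiability of $\psi$ (a.e.\ by Rademacher applied to $D\psi$). Alternatively one can assume $\psi\in C^2$, as in Theorem~\ref{thm:free-boundary-regularity}. Under either assumption the obstacle rescalings converge to $\psi_0$.

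\textbf{Step 3: the limit is a global solution.} This is the main obstacle, namely identifying the limiting equation. In $\Omega$ the Euler--Lagrange equation is quasilinear:
\[
\divg\bigl(D\Phi(Du)\bigr)=0,
\qquad
a^{ij}(Du)\,\partial_{ij}u=0,
\qquad
a^{ij}=D^2_{ij}\Phi(Du).
\]
Under quadratic rescaling $Du(x_0+ry)=D\psi(x_0)+r\,Dw_r(y)+O(r)\to D\psi(x_0)$ uniformly on compacts, so the coefficients freeze at
\[
a^{ij}_0=D^2_{ij}\Phi\bigl(D\psi(x_0)\bigr),
\]
which is positive definite on the relevant directions when $D\psi(x_0)\neq 0$ (this requires a small check). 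The rescaled equation then reads
\[
a^{ij}(Du)\,\partial_{ij}w_r=-a^{ij}(Du)\,\partial_{ij}\psi(x_0+r\cdot)
\]
in $\{w_r>0\}$. Passing to the limit uses the variational-inequality formulation. I would test the rescaled variational inequality against $w_r+\varphi$ with $\varphi\ge -w_r$ and pass to the limit using the $C^{1,\alpha}$ convergence and the weak-$*$ convergence of $D^2w_r$. This yields
\[
\min\{-a^{ij}_0\,\partial_{ij}(w_0+\psi_0),\; w_0\}=0
\quad\text{in }\mathbb{R}^{n+1}.
\]
That is a global obstacle problem for the constant-coefficient operator $a^{ij}_0\partial_{ij}$. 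It is exactly the transformed (via a linear map as in Proposition~\ref{prop:S-properties}) version of the classical one, with obstacle $\psi_0$, and this is what ``global solution of the anisotropic obstacle problem'' means here.

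\textbf{Step 4: the free boundary passes to the limit.} Finally, $0\in\partial\{w_0>0\}$. Here I use the non-degeneracy \eqref{eq:non-degeneracy}, which gives $\sup_{B_\rho}w_{r}\ge c\rho^2$ for all small $\rho$ uniformly in $r$. This property is preserved under uniform convergence, so $w_0\not\equiv0$ near $0$, while $w_0(0)=0$.
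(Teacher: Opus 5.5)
Your Steps 1, 2 and 4 are sound, and Step 1 is more careful than the paper, whose proof is just Arzel\`a--Ascoli plus ``stability of viscosity solutions''. You are right that the paper's $u_r$ contains the divergent term $r^{-1}D\psi(x_0)\cdot y$ unless $D\psi(x_0)=0$, so one must blow up $w=u-\psi$ (equivalently, subtract the tangent plane). One small correction: Rademacher gives twice differentiability of $\psi$ only Lebesgue-a.e. This says nothing about points of $\Gamma$, which is typically a null set, so you genuinely need a hypothesis such as $\psi\in C^2$ near $x_0$.

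The real gap is in Step 3, at exactly the ``small check'' you deferred: it fails. Because $\Phi$ is positively $1$-homogeneous, $D^2\Phi(p)\,p=0$ for every $p\neq 0$. Explicitly,
\[
D^2\Phi(p)=\frac{1}{\Phi(p)}\Bigl(A-\frac{Ap\,(Ap)^T}{p^TAp}\Bigr),
\]
which has rank $n$ and kernel spanned by $p$. So $a_0=D^2\Phi(D\psi(x_0))$ is singular. A linear change of variables acts by $a_0\mapsto Ma_0M^T$ and preserves rank, so no linear map, in particular not $S$, turns $a_0^{ij}\partial_{ij}$ into a multiple of $\Delta$. Under $S$ it becomes $|q_0|^{-1}\tr\bigl((I-\hat q_0\hat q_0^T)D^2\bigr)$ with $q_0=A^{1/2}D\psi(x_0)$. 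That is the Laplacian in the hyperplane orthogonal to $q_0$, the linearization of the $1$-Laplacian $\divg(Dv/|Dv|)$, which is what $S$ actually produces from $\mathcal{F}_0(v)=\int|Dv|$.

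Your limit problem $\min\{-a_0^{ij}\partial_{ij}(w_0+\psi_0),w_0\}=0$ is therefore a degenerate obstacle problem with no second-order control in one direction. Caffarelli's theory does not apply to it, and your identification of it with the classical problem, which is exactly what Lemma~\ref{lem:regular-blowup} needs, is false. Also, when $D\psi(x_0)=0$ there is nothing to freeze, since $\Phi$ is not differentiable at $0$. There the $0$-homogeneity of $D\Phi$ makes the nonlinear equation scale-invariant, and the limit can only be expected to solve the same degenerate problem $\divg(D\Phi(Du_0))=0$ in $\{u_0>\psi_0\}$; your argument does not treat this case.

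The limit passage itself is fine: subtract $D\Phi(D\psi(x_0))$, divide by $r$, and use $C^{1,\alpha}$ and weak-$*$ convergence. What is missing is ellipticity, and it cannot be recovered for $\mathcal{F}_\Phi$ as written. Your Step 3 would go through verbatim, with a genuinely elliptic constant-coefficient limit, for $\int\Phi(Du)^2$ or for the graph area $\int\sqrt{1+Du^TA\,Du}$. The first is mapped by $S$ exactly to the Dirichlet energy, the one implicitly used in the paper's frequency function.
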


\begin{proof}
The $C^{1,1}$ bound and Arzel\`a-Ascoli give compactness. The limit satisfies
the Euler-Lagrange equation by stability of viscosity solutions under uniform
limits.
\end{proof}

\begin{lemma}[Classification of blow-ups at regular points]
\label{lem:regular-blowup}
Under the non-degeneracy condition \eqref{eq:non-degeneracy}, every blow-up limit
at $x_0$ takes the form
\begin{equation}\label{eq:regular-blowup}
u_0(y) = \tfrac{1}{2}(y \cdot e)_+^2
\end{equation}
for some direction $e \in S^n$.
\end{lemma}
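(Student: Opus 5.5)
The plan is to reduce the classification to Caffarelli's classification of global solutions of the classical obstacle problem (Theorem~\ref{thm:caffarelli}), using the Cahn--Hoffman transform $S$ of Proposition~\ref{prop:S-properties} to pass between the anisotropic and isotropic settings.

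\textbf{Step 1 (reduction to a global problem).} Let $u_0$ be a blow-up limit given by Lemma~\ref{lem:blowup-compactness}, and set $w_0 = u_0 - \psi_0$ with $\psi_0(y)=\tfrac12 y^TD^2\psi(x_0)y$. By the $C^{1,1}$ bounds of Theorem~\ref{thm:existence-regularity}(iii) and $C^{1,\alpha}_{\loc}$ convergence, $w_0\geq 0$ and $w_0\in C^{1,1}_{\loc}(\mathbb{R}^{n+1})$ with $w_0(0)=|Dw_0(0)|=0$. In the free set $\{w_0>0\}$ I expect $w_0$ to satisfy a constant-coefficient linear equation $\tr(M D^2 w_0) = \kappa$. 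Here $M$ is the coefficient matrix of the linearization of the Euler--Lagrange operator at the contact gradient; after the transform $S$ this becomes a multiple of the identity. The constant $\kappa>0$ is $-\tr(M D^2\psi(x_0))$, whose sign I would take from the convexity assumption on the obstacle, used with the appropriate sign convention.

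\textbf{Step 2 (isotropic normalization).} Composing with the linear map $A^{1/2}$ as in Proposition~\ref{prop:S-properties}(iii), and rescaling by a constant, the function $\tilde w(y)=w_0(A^{1/2}y)/\kappa$ becomes a nonnegative global $C^{1,1}$ solution of $\Delta\tilde w=\chi_{\{\tilde w>0\}}$. The non-degeneracy condition \eqref{eq:non-degeneracy} is invariant under linear maps and rescaling, up to changing $c$. It passes to the limit by uniform convergence, so $\sup_{B_r}\tilde w\geq c' r^2$ for every $r>0$.

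\textbf{Step 3 (classification).} Apply Caffarelli's classification of global solutions. A Weiss-type monotonicity formula shows that $\tilde w$ is $2$-homogeneous; then $\tilde w$ is either a half-space solution $\tfrac12(y\cdot\tilde e)_+^2$ or a convex quadratic polynomial $\tfrac12 y^TQy$ with $\tr Q=1$.

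This step is the main obstacle. Quadratic growth alone does not rule out the polynomial alternative, because polynomials also grow like $r^2$. I would therefore read the non-degeneracy hypothesis together with the regular/singular dichotomy of Theorem~\ref{thm:caffarelli}: at a regular point the contact set has positive density in every small ball. This density persists in the blow-up limit by the uniform $C^{1,1}$ bound and uniform convergence, so $\{\tilde w=0\}$ has nonempty interior, which excludes a polynomial with $\tr Q=1$. If necessary, I would add the positive-density condition explicitly as part of what ``non-degenerate'' means at $x_0$.

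\textbf{Step 4 (return to original coordinates).} Undoing the transformation, $(A^{-1/2}y\cdot\tilde e)_+^2$ equals $|A^{-1/2}\tilde e|^2\,(y\cdot e)_+^2$ with $e=A^{-1/2}\tilde e/|A^{-1/2}\tilde e|$. This gives the half-plane form \eqref{eq:regular-blowup} for $u_0-\psi_0$, up to a positive multiplicative constant depending on $A$ and $\kappa$. One then either absorbs this constant by normalization, or states the lemma with it, as the precise constants dictate.
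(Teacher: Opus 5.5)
You follow the paper's route exactly: transform by $S$, invoke Caffarelli's classification of global solutions, discard the polynomial alternative, pull back. In Step~3 you have found the point where the paper's own proof breaks. The paper simply asserts that non-degeneracy prevents $v_0$ from being a quadratic polynomial. But $\tfrac12 y^TQy$ with $Q\ge 0$, $\tr Q=1$ satisfies \eqref{eq:non-degeneracy}. In Caffarelli's theory quadratic growth holds at \emph{every} free boundary point, singular ones included, so item (b) of Theorem~\ref{thm:caffarelli}, with its $o(r^2)$, is misstated. Hence the lemma as stated does not follow from \eqref{eq:non-degeneracy}; its classical analogue is false at singular points, e.g.\ for $w=\tfrac12x_1^2$. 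Your repair, a positive-density hypothesis on the contact set, therefore proves a different statement (the correct classical one), not this one. The repair itself is sound once phrased with measure instead of interior. Uniform convergence gives $\limsup_j\{w_{r_j}=0\}\subset\{w_0=0\}$. Reverse Fatou then gives $|\{w_0=0\}\cap B_1|>0$. Finally, the zero set $\ker Q$ of a nonzero semidefinite form is Lebesgue-null.

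Steps~1--2 contain gaps you do not flag, and the paper shares them.

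\emph{The linearization is degenerate.} The integrand $\Phi(p)=\sqrt{p^TAp}$ of \eqref{eq:anisotropic-functional} is $1$-homogeneous, so $D^2\Phi(p)\,p=0$. Your $M=D^2\Phi(D\psi(x_0))$ therefore has rank $n$, and it is undefined if $D\psi(x_0)=0$. In the variables of $S$ the operator $\tr(MD^2w)$ becomes $|q_0|^{-1}\tr\bigl((I-\hat q_0\hat q_0^{T})D^2\tilde w\bigr)$ with $q_0=A^{1/2}D\psi(x_0)$. This is a Laplacian in the directions orthogonal to $q_0$ only. Congruence preserves rank, so no linear change of variables produces $\Delta$, and you never reach $\Delta\tilde w=\chi_{\{\tilde w>0\}}$.

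Indeed the transformed problem of Proposition~\ref{prop:equivalence} is a total-variation obstacle problem, outside Caffarelli's theory. Take $D'=B_1$ and $\tilde\psi(y)=\tfrac12-|y|^2$. The coarea formula, together with the fact that balls minimize perimeter among sets containing them, shows the minimizer is $\tilde\psi_+$. Then $v-\tilde\psi=(|y|^2-\tfrac12)_+$ is not even $C^1$ across $\Gamma$, contradicting the $C^{1,1}$ input of your Step~1.

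\emph{The sign is wrong.} $M\ge0$ and $D^2\psi(x_0)\ge0$ give $\kappa=-\tr(MD^2\psi(x_0))\le0$. No sign convention rescues this. A convex $\psi$ with $\psi<0$ on $\partial D$ is negative throughout $D$ by the maximum principle. So $u\equiv0$, with $\mathcal F_\Phi(0)=0$, is the minimizer and $\Gamma=\emptyset$.

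\emph{What a meaningful version needs.} You would need three things:
(1) a uniformly elliptic integrand, normalized by $w\circ M^{1/2}$, which agrees with your $w\circ A^{1/2}$ up to dilation only when $M$ is a multiple of $A$;
(2) a concavity-type condition $\tr(MD^2\psi(x_0))<0$;
(3) the density hypothesis.
The conclusion is then the half-space profile for the blow-up of $u-\psi$, with a factor depending on $M$ and $\kappa$, as your Step~4 records. It is not \eqref{eq:regular-blowup} for $u_0$, whose rescaling \eqref{eq:blow-up} diverges whenever $D\psi(x_0)\neq0$.
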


\begin{proof}
Applying the Cahn-Hoffman transform to the blow-up yields a function $v_0$
satisfying the isotropic obstacle problem with a Robin condition. Non-degeneracy
prevents $v_0$ from being a quadratic polynomial (the signature of a singular
point), so Caffarelli's classification forces $v_0$ to be a half-plane solution.
Pulling back through $S^{-1}$ gives \eqref{eq:regular-blowup}.
\end{proof}

\subsection{Flatness implies smoothness}

\begin{definition}[Flatness]\label{def:flatness}
The free boundary $\Gamma$ is \emph{$\epsilon$-flat} in $B_r(x_0)$ if, after a
rotation,
\begin{equation}\label{eq:flatness}
\Gamma \cap B_r(x_0) \subset \{ x : |x_{n+1}| \leq \epsilon r \}.
\end{equation}
\end{definition}

\begin{proposition}[Improvement of flatness]\label{prop:improvement-flatness}
There exist $\epsilon_0 > 0$ and $\theta \in (0,1)$ such that if $\Gamma$ is
$\epsilon$-flat in $B_1(0)$ with $\epsilon \leq \epsilon_0$, then $\Gamma$ is
$\theta\epsilon$-flat in $B_\theta(0)$ after a suitable rotation.
\end{proposition}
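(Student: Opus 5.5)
We prove Proposition~\ref{prop:improvement-flatness} by the classical compactness/contradiction argument for flat free boundaries. We work throughout with the transformed function $v=u\circ A^{1/2}$ and the transformed obstacle $\tilde\psi$ from Proposition~\ref{prop:equivalence}. We also use the difference $w=v-\tilde\psi\ge 0$, which vanishes exactly on the transformed contact set.

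Since $S$ is linear with $\|S\|,\|S^{-1}\|$ bounded by $\lambda_1^{-1/2}$ and $\lambda_{n+1}^{1/2}$, $\epsilon$-flatness of $\Gamma$ in $B_1$ implies $C\epsilon$-flatness of $\tilde\Gamma$ in an ellipsoid containing a ball $B_{c}$. Conversely, flatness of $\tilde\Gamma$ pulls back with a constant depending only on the eigenvalues of $A$. It therefore suffices to prove improvement of flatness for $\tilde\Gamma$ and absorb the distortion constants into the choice of $\theta$.

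\textbf{Step 1 (contradiction setup).} Suppose the statement fails for a given $\theta$ (fixed later). Then there are solutions $w_k$, with rescaled obstacles whose $C^{1,1}$ norms tend to $0$, whose free boundaries are $\epsilon_k$-flat in $B_1$ with $\epsilon_k\to0$, but are not $\theta\epsilon_k$-flat in $B_\theta$ in any direction. By Theorem~\ref{thm:existence-regularity}(iii) and the non-degeneracy \eqref{eq:non-degeneracy}, the $w_k$ are uniformly $C^{1,1}$ and quadratically non-degenerate. By Lemmas~\ref{lem:blowup-compactness} and~\ref{lem:regular-blowup} they converge to the half-space solution $\tfrac12(y_{n+1})_+^2$, possibly after rotation.

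\textbf{Step 2 (linearization).} Write the free boundary of $w_k$ as the set $\{y_{n+1}=\epsilon_k g_k(y')\}$ in a weak sense: the trapped region between $\pm\epsilon_k$ planes. Consider the normalized deviations
\[
h_k=\frac{w_k-\tfrac12(y_{n+1})_+^2}{\epsilon_k}.
\]
A Harnack-type inequality for flat solutions (constructed with barriers of the form $\tfrac12(y_{n+1}+\epsilon a)_+^2$, which are sub- and supersolutions since the operator is the Laplacian after transformation) yields Hölder compactness of the $h_k$ up to the free boundary. A subsequence then converges to a limit $h$ that is harmonic in $\{y_{n+1}>0\}$ and satisfies the linearized boundary condition. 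For the classical obstacle problem this is the Neumann-type condition $\partial_{n+1}(h/y_{n+1})=0$ on $\{y_{n+1}=0\}$. The Robin correction of Proposition~\ref{prop:fb-condition} is of lower order after rescaling, since $\Lambda\langle\tilde\nu,y_0\rangle$ scales like $r$, and it disappears in the limit.

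\textbf{Step 3 (regularity of the linear problem and conclusion).} Solutions of the limit problem are smooth up to $\{y_{n+1}=0\}$ by reflection. Hence there is an affine function $\ell(y')=a+b\cdot y'$ with $|h-\ell\,y_{n+1}|\le C\rho^2$ in $B_\rho$. Choosing $\theta$ with $C\theta\le\theta/2$ (after rescaling) and tilting the direction $e_{n+1}$ by $\epsilon_k b$, the free boundaries of $w_k$ become $\tfrac{\theta}{2}\epsilon_k$-flat in $B_\theta$ for $k$ large. This contradicts Step 1. Pulling back by $S^{-1}$ gives the stated result, with $\theta$ and $\epsilon_0$ depending only on $n$ and the eigenvalues of $A$.

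\textbf{Main obstacle.} The hardest step will be the Harnack inequality in Step 2, i.e.\ showing that the flatness is improved in a uniform Hölder fashion so that the $h_k$ are compact. I would first try De Silva's partial Harnack argument, transplanted via $S$. That argument compares $w_k$ with the translated one-dimensional profiles above and uses non-degeneracy to push them by $c\epsilon$ in a smaller ball. The key point to check is that the obstacle perturbation, of size $o(1)$ in $C^{1,1}$, does not destroy the barrier comparison.
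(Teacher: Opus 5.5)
\paragraph{The transformed problem is not the Laplacian obstacle problem.} Your Step~2 assumes that after the transform ``the operator is the Laplacian''. That is false, and Steps~1--3 all depend on it.

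By Proposition~\ref{prop:functional-transform}, $S$ turns $\mathcal{F}_\Phi$ into $\sqrt{\det A}\int_{D'}|Dv|\,dy$. This is the total variation, not the Dirichlet energy $\int\frac12|Dv|^2$. Its Euler--Lagrange operator in the free set is the degenerate $1$-Laplacian $\divg(Dv/|Dv|)$. Even for the area-type integrand $\sqrt{1+Du^TA\,Du}$ behind $\mathcal{M}_\Phi$, $S$ produces the minimal surface operator, not $\Delta$. Hence $w=v-\tilde\psi$ does not satisfy $\Delta w=-\Delta\tilde\psi$ in $\{w>0\}$, and none of the tools you import are available:
\begin{itemize}
\item Every monotone function of $y_{n+1}$, however irregular, has least gradient. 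So there is no $C^{1,1}$ bound and no quadratic scaling to give compactness.
\item The equation for $w$ neither decouples from $\tilde\psi$ nor has a positive right-hand side. That positive right-hand side is what makes the profiles $\frac12(y_{n+1}+\epsilon a)_+^2$ strict barriers in De Silva's partial Harnack argument.
\item The linearized limit is not a harmonic function with a Neumann-type condition.
\end{itemize}

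The paper's proof, which transfers Caffarelli's isotropic result through the bi-Lipschitz map $S$, makes exactly the same identification (compare ``$\Delta v=0$'' in the sketch of Proposition~\ref{prop:fb-condition}). So your De Silva-type argument differs from the paper only in deriving the isotropic step instead of citing it, and the paper supplies no ingredient that closes this gap. The reduction is legitimate only for a quadratic energy such as $\int\frac12 Du^TA\,Du$ with $-\tr(A\,D^2\psi)\ge c>0$, which $S$ turns into the classical obstacle problem.

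\paragraph{Further gaps, even in a genuinely Laplacian setting.} In Step~1 there are three problems.
\begin{itemize}
\item Lemmas~\ref{lem:blowup-compactness} and~\ref{lem:regular-blowup} concern $r\to0$ rescalings at one fixed point. They say nothing about a sequence of different solutions.
\item The hypothesis only traps $\Gamma$ in a slab. This does not exclude limits like $\frac12 y_{n+1}^2$, which is non-degenerate and has the whole hyperplane as free boundary.
\item Obstacles whose $C^{1,1}$ norms tend to $0$ force $\Delta w=0$ in $\{w>0\}$ in the limit, which is incompatible with $\frac12(y_{n+1})_+^2$.
\end{itemize}
As in De Silva's framework, you need instead the trapping $\frac12(y_{n+1}-\epsilon)_+^2\le w\le\frac12(y_{n+1}+\epsilon)_+^2$, a right-hand side $-\Delta\tilde\psi$ close to $1$, and non-degeneracy uniform along the sequence. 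Note that \eqref{eq:non-degeneracy} is pointwise, with solution-dependent constants.

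In Step~3, the condition $C\theta\le\theta/2$ cannot be arranged by choosing $\theta$. With Definition~\ref{def:flatness} (width $\epsilon r$ in $B_r$), ``$\theta\epsilon$-flat in $B_\theta$'' means deviation at most $\theta^2\epsilon$ from a plane. Your bound $|h-\ell\,y_{n+1}|\le C\rho^2$ gives only $C\theta^2\epsilon+o(\epsilon)$, so you would need $C<1$. The method does not provide this, since a regular free boundary can have curvature equal to a large fixed multiple of its flatness. For the same reason, the distortion factor $\sqrt{\lambda_{n+1}/\lambda_1}$ of $S$ cannot be absorbed into $\theta$, because $\theta$ is simultaneously the radius and the gain.

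What the linearization actually proves, and what survives conjugation by $S$ (iterate in the $y$-variables, then pull back once), is the standard statement: $\epsilon$-flat in $B_1$ implies $\frac{\epsilon}{2}$-flat in $B_\rho$ for some small $\rho$ depending on $n$ and $A$.
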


\begin{proof}
This follows from the corresponding isotropic result \cite{Caffarelli1977} via the
Cahn-Hoffman transform. Since $S$ is bi-Lipschitz, geometric flatness is preserved
between the original and transformed coordinates up to constants depending on $A$.
\end{proof}

\begin{proof}[Proof of Theorem~\ref{thm:free-boundary-regularity}]
Lemma~\ref{lem:regular-blowup} shows that the blow-up at a non-degenerate point
is a half-plane solution, so $\Gamma$ is flat at small scales near $x_0$.
Iterating Proposition~\ref{prop:improvement-flatness} gives flatness at every
scale with a geometrically decaying constant, and standard arguments
\cite{CaffarelliSalsa2005} convert this to $C^{1,\alpha}$-regularity.

For $\psi \in C^{k,\alpha}$ with $k \geq 2$: once $\Gamma$ is known to be
$C^{1,\alpha}$, the free boundary condition becomes a nonlinear oblique derivative
condition with $C^{1,\alpha}$ coefficients, and Schauder estimates give
$C^{k,\alpha}$ regularity by induction.
\end{proof}

\section{The Singular Set}
\label{sec:singular}

\begin{proof}[Proof of Theorem~\ref{thm:singular-set}, first part]
We follow Federer's dimension reduction, adapted to the anisotropic setting.

At a singular point $x_0 \in \Sigma$, the blow-up is not a half-plane solution.
By Lemma~\ref{lem:regular-blowup}, it must be a homogeneous quadratic polynomial
\begin{equation}\label{eq:singular-blowup}
u_0(y) = \tfrac{1}{2} y^T Q y,
\end{equation}
where $Q$ is symmetric positive semi-definite with $\tr Q = 1$.

Applying the Cahn-Hoffman transform, $v_0 = u_0 \circ A^{1/2}$ satisfies the
isotropic obstacle problem with a quadratic obstacle. The dimension bound
$\mathcal{H}^{n-1}(\Sigma) = 0$ then follows by transferring Caffarelli's result 
\cite{Caffarelli1977} via the bi-Lipschitz map $S$. Rectifiability is proved in
Section~\ref{sec:singular-optimal} below.
\end{proof}

\begin{example}[Singular point in two dimensions]
Let $n = 1$, $D = B_1(0) \subset \mathbb{R}^2$. Take $\psi(x) = -x_1^2$ and
$A = \diag(1, \lambda)$ with $\lambda > 1$. The contact region is an interval on
the $x_1$-axis, and the free boundary consists of two arcs meeting at the origin.
The cusp angle at the origin depends on $\lambda$, and the origin is a singular
point where $\Gamma$ fails to be $C^1$. For the isotropic case $\lambda = 1$,
the free boundary is smooth everywhere when the obstacle is smooth and convex.
\end{example}

\section{Optimal Regularity of the Singular Set}
\label{sec:singular-optimal}

In the isotropic problem, Caffarelli \cite{Caffarelli1977} proved $\mathcal{H}^{n-1}
(\Sigma) = 0$, but the geometric structure of $\Sigma$ remained mysterious for
decades. The breakthrough of Figalli and Serra \cite{FigalliSerra2019} established
that $\Sigma$ is $(n-1)$-rectifiable. We prove the same for the anisotropic
problem by transferring their argument through the Cahn-Hoffman transform.

\subsection{The frequency function}

\begin{definition}[Frequency function]\label{def:frequency}
For a solution $u$ of the anisotropic obstacle problem, the
\emph{frequency function} at $x_0 \in D$ is
\begin{equation}\label{eq:frequency}
N(r, x_0, u) = \frac{r \displaystyle\int_{B_r(x_0)} \Phi(Du)^2 \, dx}
{\displaystyle\int_{\partial B_r(x_0)} (u - \psi)^2 \, d\mathcal{H}^n}.
\end{equation}
\end{definition}

\begin{proposition}[Monotonicity of the frequency]\label{prop:frequency-monotonicity}
The function $r \mapsto N(r, x_0, u)$ is non-decreasing. It is constant if and
only if $u - \psi$ is homogeneous.
\end{proposition}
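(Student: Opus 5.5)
The plan is an Almgren-type computation carried out directly in the original coordinates. The reason for staying there is that the transform $S$ of Proposition~\ref{prop:S-properties} turns the Euclidean balls $B_r(x_0)$ of \eqref{eq:frequency} into ellipsoids, so transferring a known isotropic frequency formula would not reproduce $N$ as stated. Write $w=u-\psi\ge 0$ and set
\[
D(r)=\int_{B_r(x_0)}\Phi(Du)^2\,dx,\qquad H(r)=\int_{\partial B_r(x_0)}w^2\,d\mathcal{H}^n,
\]
so that $N=rD/H$. The goal is to show $\frac{d}{dr}\log N=\frac1r+\frac{D'}{D}-\frac{H'}{H}\ge 0$ for a.e.\ $r$ with $H(r)>0$. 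The required regularity of $w$ is available from Theorem~\ref{thm:existence-regularity}(iii): since $w\in C^{1,1}_{\loc}$, all boundary integrals make sense and $r\mapsto D,H$ are absolutely continuous.

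\textbf{Step 1 (Euler--Lagrange structure).} In the free set $\Omega$ I will use the equation in divergence form with the constant coefficient matrix $A$, i.e.\ $\divg(A\,Du)=0$ at the linearized level, together with the complementarity relation $w\,\divg(A\,Dw)=0$ a.e.\ in $D$. Complementarity holds because $w=0$ on $\Lambda$, and $Dw=0$ a.e.\ there by the $C^{1,1}$ regularity. Integrating by parts then expresses the energy as a boundary term,
\[
\int_{B_r}\Phi(Dw)^2\,dx=\int_{\partial B_r}w\,\langle A\,Dw,\nu\rangle\,d\mathcal{H}^n .
\]
The error between $\Phi(Du)^2$ and $\Phi(Dw)^2$ consists of terms in $D\psi$, which have to be absorbed. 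I would first reduce to the case where $x_0$ is a critical point of $\psi$ after subtracting its affine part, as in the blow-up setting of Lemma~\ref{lem:blowup-compactness}. There $\psi$ is replaced by the quadratic $\psi_0$, and $D(u-\psi_0)$ and $Du$ differ by a homogeneous linear field; that field is handled by the same integration by parts.

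\textbf{Step 2 (derivatives of $D$ and $H$).} A direct computation gives
\[
H'=\frac nr H+2\int_{\partial B_r}w\,\partial_\nu w .
\]
For $D'$ I will use a Rellich--Pohozaev identity, testing the equation with the vector field $X=(x-x_0)$. This gives
\[
D'=\frac{n-1}{r}D+2\int_{\partial B_r}\partial_\nu w\,\langle A\,Dw,\nu\rangle ,
\]
up to terms supported on $\Gamma$, which vanish because $w=|Dw|=0$ there.

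\textbf{Step 3 (Cauchy--Schwarz and rigidity).} Inserting Steps 1 and 2 into $\frac{d}{dr}\log N$ reduces monotonicity to a Cauchy--Schwarz inequality of the form
\[
\Bigl(\int_{\partial B_r}w\,\langle A Dw,\nu\rangle\Bigr)^2\le\int_{\partial B_r}w^2\cdot\int_{\partial B_r}\bigl(\cdots\bigr)^2 .
\]
Equality in this inequality forces $\langle A\,Dw,\nu\rangle=\kappa(r)\,w$ on each sphere. For the homogeneity statement, equality should instead force $x\cdot Dw=N\,w$, which by Euler's theorem for homogeneous functions characterizes $w$ as homogeneous of degree $N$. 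The converse direction, that homogeneous $w$ gives constant $N$, is a direct scaling check.

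\textbf{Main obstacle.} The hard step is reconciling $\partial_\nu w$, produced by differentiating $H$ over Euclidean spheres, with $\langle A\,Dw,\nu\rangle$, produced by the anisotropic energy. When $A\ne I$ these do not match, and the Cauchy--Schwarz step does not close as written. My first attempt will be to run the Rellich identity with the field $X=A(x-x_0)$ in place of $x-x_0$, so that the boundary flux becomes $\langle A\,Dw,\nu\rangle$ in both $D'$ and $H'$. I will then check whether the discrepancy terms carry a sign controlled by the ellipticity bounds \eqref{eq:Phi-bounds}. If they do not, the argument as planned yields only monotonicity with respect to the level sets of $x^TA^{-1}x$, i.e.\ the pull-back of spheres under $S$, rather than for the Euclidean balls appearing in \eqref{eq:frequency}.
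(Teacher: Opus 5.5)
\textbf{The proposal has a genuine gap: it does not prove the statement, and several of its steps fail.}

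Your objection to the transfer route is correct, and it applies directly to the paper's own proof. The paper rewrites $N$ as the isotropic Almgren quotient on Euclidean balls $B_r(y_0)$, cites Weiss and Garofalo--Lin, and asserts that monotonicity ``transfers up to constants'' because $S$ is bi-Lipschitz. But $S$ maps $B_r(x_0)$ onto ellipsoids. Moreover, a two-sided bound $N\asymp\tilde N$ would at best give $N(s)\le C\,N(r)$ for $s<r$, never monotonicity. So the paper has no argument for your computation to match, and your computation does not close either.

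The first fatal step is the ``absorption'' of the $D\psi$-terms in Step~1. The numerator in \eqref{eq:frequency} is $\int\Phi(Du)^2$, not $\int\Phi(Dw)^2$, and at a free boundary point with $D\psi(x_0)\neq0$ this cannot be repaired. Since $w\ge0$, $w(x_0)=0$ and $w\in C^{1,1}$, one has $Du(x_0)=D\psi(x_0)$. Hence $r\int_{B_r(x_0)}\Phi(Du)^2\,dx\ge c\,r^{n+2}$ for small $r$, while \eqref{eq:growth-estimate} gives $H(r)\le C\,r^{n+4}$. Therefore $N(r)\ge c\,r^{-2}\to\infty$ as $r\to0$, which is incompatible with $N$ being non-decreasing. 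The statement as written fails at every such point. Your normalization to $D\psi(x_0)=0$ by subtracting an affine function from $u$ and $\psi$ is not available: it leaves $w$ unchanged but changes the numerator of $N$. For the same reason your ``direct scaling check'' of the converse is wrong, since homogeneity of $w$ says nothing about how $\int_{B_r}\Phi(Du)^2$ scales.

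Even for the corrected quotient with $\Phi(Dw)^2$, two further steps fail.

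First, the complementarity relation $w\,\divg(A\,Dw)=0$ is false on $\Omega$. Grant $\divg(A\,Du)=0$ there, although that is not the Euler--Lagrange equation of $\int\Phi(Du)$, which is $\divg(A\,Du/\Phi(Du))=0$. One then gets $\divg(A\,Dw)=-\tr(A\,D^2\psi)$ on $\Omega$, which is generally nonzero. The resulting bulk term in your Rellich identity is not supported on $\Gamma$. For equations $\divg(A\,Dw)=f\chi_{\{w>0\}}$ the Almgren quotient is simply not monotone, which is why Weiss's energy is used instead. Already in one dimension, take $w=\tfrac{1}{2}x_+^2+\tfrac{1}{2}(x+b)_-^2$ at $x_0=0$. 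For $r>b$ this gives $N=\tfrac{4}{3}(1+\rho^3)/(1+\rho^4)$ with $\rho=1-b/r$, which rises above $\tfrac{4}{3}$ and returns to it as $r\to\infty$.

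Second, the mismatch you flag between $\partial_\nu w$ and $\langle A\,Dw,\nu\rangle$ has no sign that ellipticity could control. Already for $A$-harmonic functions the Euclidean-ball quotient is not monotone. In $\mathbb{R}^2$ with $A=\diag(1,2)$, the function $w=1-x_1^2+\tfrac{1}{2}x_2^2$ satisfies $\divg(A\,Dw)=0$ and
\[
\frac{r\int_{B_r}\Phi(Dw)^2\,dx}{\int_{\partial B_r}w^2\,d\mathcal{H}^1}=\frac{3}{2}\cdot\frac{r^4}{2-r^2+\frac{11}{16}r^4},
\]
which decreases for $r>2$. The cause is that the $A$-harmonic quadratic $-x_1^2+\tfrac{1}{2}x_2^2$ has nonzero mean on circles, producing a cross term that is absent when $A=I$. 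The same mismatch is why the equality case you would obtain, $\langle A\,Dw,\nu\rangle=\kappa w$, is not Euler's relation $x\cdot Dw=N w$.

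What your method can actually deliver is your fallback. That is monotonicity on the ellipsoids $\{(x-x_0)^TA^{-1}(x-x_0)<r^2\}$, with the pulled-back boundary measure, for solutions of the homogeneous linear equation. This is a different statement from the one claimed.
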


\begin{proof}
Applying $S(x) = A^{-1/2}x$ with $y_0 = S(x_0)$ and $v = u \circ A^{1/2}$,
the frequency function becomes
\[
\tilde N(r, y_0, v) = \frac{r \displaystyle\int_{B_r(y_0)} |Dv|^2 \, dy}
{\displaystyle\int_{\partial B_r(y_0)} (v - \tilde\psi)^2 \, d\mathcal{H}^n},
\]
the classical Almgren frequency for the isotropic problem. Monotonicity of
$\tilde N$ was proved by Weiss \cite{Weiss1999}; see also Garofalo and Lin
\cite{GarofaloLin1986}. Since $S$ is bi-Lipschitz, monotonicity transfers to
$N$ up to constants depending on $\lambda_1, \lambda_{n+1}$.
\end{proof}

\subsection{The logarithmic epiperimetric inequality}

\begin{theorem}[Logarithmic epiperimetric inequality]
\label{thm:epiperimetric}
Let $x_0 \in \Sigma$ and let $u_0$ be the blow-up \eqref{eq:singular-blowup}
at $x_0$. There exist constants $\kappa > 0$, $\gamma > 0$, and $\delta > 0$
such that whenever
\begin{equation}\label{eq:closeness-assumption}
\|u - u_0\|_{W^{1,2}(B_1(x_0))} \leq \delta,
\end{equation}
one has
\begin{equation}\label{eq:epiperimetric}
\mathcal{F}_\Phi(u, B_1(x_0)) - \mathcal{F}_\Phi(u_0, B_1(x_0))
\geq \kappa \left|\log\!\left(\|u - u_0\|_{W^{1,2}}\right)\right|^{-\gamma}
\|u - u_0\|_{W^{1,2}}^2.
\end{equation}
\end{theorem}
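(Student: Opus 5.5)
The plan is to push the inequality through the Cahn--Hoffman transform and prove it in the isotropic variables as a convexity statement about $\mathcal{F}_0$, using the log-epiperimetric machinery only to handle the degenerate directions. Set $v=u\circ A^{1/2}$, $v_0=u_0\circ A^{1/2}$ and $E=S(B_1(x_0))$, an ellipsoid comparable to a ball with constants depending on $\lambda_1,\lambda_{n+1}$. By Proposition~\ref{prop:functional-transform},
\[
\mathcal{F}_\Phi(u,B_1(x_0))-\mathcal{F}_\Phi(u_0,B_1(x_0))=\sqrt{\det A}\int_E\bigl(|Dv|-|Dv_0|\bigr)\,dy .
\]
By Proposition~\ref{prop:S-properties}(iii) and \eqref{eq:Phi-bounds}, $\|v-v_0\|_{W^{1,2}(E)}$ and $\|u-u_0\|_{W^{1,2}(B_1(x_0))}$ are comparable. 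So it suffices to prove the isotropic estimate on $E$ with $\kappa,\gamma,\delta$ depending only on $n$ and the eigenvalues of $A$; the $\log$ factor absorbs the comparability constants after shrinking $\delta$. I will read the statement with $u$ ranging over functions admissible for the obstacle problem, i.e.\ $u\ge\psi$ near $x_0$. Without some such constraint the left side can be made negative by lowering $u_0$ slightly, so this restriction is necessary for the statement to hold.

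The first step is a pointwise expansion. Writing $w=v-v_0$ and $p=Dv_0$, $q=Dw$, I use
\[
|p+q|-|p|=\frac{p\cdot q}{|p|}+\bigl(|p+q|-|p|-\tfrac{p\cdot q}{|p|}\bigr),
\]
where the bracket is nonnegative and at least $c\,|q^\perp|^2/(|p|+|q|)$, with $q^\perp$ the component of $q$ orthogonal to $p$. For the linear term I invoke the variational inequality for the blow-up $v_0$. By Lemma~\ref{lem:blowup-compactness} and the transfer used in Lemma~\ref{lem:regular-blowup}, $v_0$ is a global solution of the transformed obstacle problem with the quadratic obstacle. Since $w$ is an admissible direction ($v\ge\tilde\psi$), this gives $\int_E \frac{Dv_0}{|Dv_0|}\cdot Dw\ge -(\text{boundary term on }\partial E)$. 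The boundary term will be controlled by the trace of $w$ and folded into the quadratic remainder via the smallness $\|w\|\le\delta$. At this point the left side is bounded below by $\int_E |Dw^\perp|^2/(|Dv_0|+|Dw|)$ minus an error that is quadratically small.

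The main obstacle is recovering the full norm $\|w\|_{W^{1,2}}^2$ from this degenerate quantity. Two directions are lost. The first is the radial direction $Dw\parallel Dv_0$, which the 1-homogeneity of $|\cdot|$ does not see. The second is the region near $\ker Q$, where $Dv_0=A^{1/2}Qy$ is small. For the radial part I will decompose $w$ into its projection onto the finite-dimensional manifold of $2$-homogeneous quadratic blow-ups $\{\tfrac12 y^TQ'y: Q'\ge0,\ \tr Q'=1\}$, transported by $S$, plus an orthogonal remainder. On the remainder, the Poincar\'e-type control from the tangential gradient gives a genuine coercive bound. Motion along the manifold is exactly where Figalli--Serra's analysis shows that coercivity fails at polynomial rate but holds up to a logarithmic loss, via a \L{}ojasiewicz inequality for the analytic reduced energy on the space of $Q'$. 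I will import that inequality as in \cite{FigalliSerra2019}, reading it in the $y$-variables where the problem is isotropic, to obtain the factor $|\log\|w\||^{-\gamma}$. Near $\ker Q$ the weight $|Dv_0|^{-1}$ only helps, so the set where $|Dv_0|\lesssim|Dw|$ is treated separately by the trivial bound $|p+q|-|p|\ge |q|-2|p|$ together with its small measure, which is controlled by $\delta$. Combining the three regions and pulling back by $S^{-1}$ yields \eqref{eq:epiperimetric}. I expect the \L{}ojasiewicz step on the quadratic manifold to be the delicate part: the constants $\kappa,\gamma$ are produced there, and the compatibility of the obstacle constraint with the projection must be checked there.
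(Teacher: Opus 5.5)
Your reduction through $S$ is sound, and more careful than the paper's, since $S(B_1(x_0))$ is the ellipsoid $E$ and not $B_1(y_0)$. Your pointwise bound on $|p+q|-|p|-\frac{p\cdot q}{|p|}$ is also correct.

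\textbf{The linear term.} This is where the argument breaks. You claim that, because $v_0$ solves the transformed obstacle problem, $\int_E\frac{Dv_0}{|Dv_0|}\cdot Dw\ge-(\text{boundary term})$ for admissible $w$. But $v_0(y)=\tfrac12y^TMy$ with $M=A^{1/2}QA^{1/2}\ge0$ (so $Dv_0=My$, not $A^{1/2}Qy$). Such a $v_0$ is not a minimizer of the $1$-homogeneous functional $\mathcal{F}_0=\int|Dv|$ under any constraint from below, so you cannot take this from Lemma~\ref{lem:blowup-compactness}.

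To see this, set $\sigma=My/|My|$. When $\operatorname{rank}M\ge2$ one computes $\divg\sigma=(\tr M-\sigma^TM\sigma)/|My|>0$. When $M=m\,e\otimes e$ with $|e|=1$, the distribution $\divg\sigma$ is $2\,\mathcal{H}^n$ restricted to the hyperplane $\{e\cdot y=0\}$.

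Now take $w=\epsilon\eta$ with $0\le\eta\in C^\infty_c(E)$ and $\eta\not\equiv0$.
\begin{itemize}
\item $v=v_0+w\ge v_0$ satisfies every lower constraint that $v_0$ does; your admissibility reading forbids lowering $u_0$, not raising it.
\item There is no boundary term, and the linear term equals $-\epsilon\langle\divg\sigma,\eta\rangle<0$. This is of \emph{first} order in $\|w\|_{W^{1,2}}$.
\item Your remainder is only $O(\epsilon^2\log(1/\epsilon))$: bound the bracket by $\min\{2|q|,4|q|^2/|p|\}$ and use $|My|\ge c\,\dist(y,\ker M)$.
\end{itemize}
So the left side of \eqref{eq:epiperimetric} is negative for small $\epsilon$, while the right side is positive. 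No estimate of the kind you propose can close. Already in one variable, $\tfrac12t^2$ on $[-1,1]$ has total variation $1$ and is beaten by the constant $\tfrac12$, which lies above it.

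\textbf{The \L{}ojasiewicz step.} This is also not available as you describe it. The logarithmic epiperimetric inequality of Colombo--Spolaor--Velichkov and the Figalli--Serra analysis concern the classical obstacle problem $\Delta v=\chi_{\{v>0\}}$. Its Weiss energy is $W(v)=\int_{B_1}(|Dv|^2+2v)-2\int_{\partial B_1}v^2$, which is quadratic in $Dv$; those results say nothing about $\int|Dv|$.

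Moreover, even for $W$, a coercivity bound relative to a \emph{fixed} $u_0$ is false. For $Q'\ge0$ with $\tr Q'=1$ one has $W(\tfrac12y^TQ'y)=\int_{B_1}\tfrac12y^TQ'y\,dy$, which is independent of $Q'$. So moving along the family of singular blow-ups gives zero energy gap at positive distance: coercivity fails outright there, not merely up to a logarithmic loss.

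What the isotropic theory actually proves is an improvement statement,
$W(h)-\Theta\le(1-c_0|W(z)-\Theta|^{\gamma})(W(z)-\Theta)$. Here $\Theta$ is the common energy of singular blow-ups, $z$ is the $2$-homogeneous extension of a boundary trace, and $h$ is a competitor with the same trace. Its output is $|\log r|^{-\beta}$ decay of the rescalings toward \emph{some} blow-up, not a lower bound relative to a fixed one.

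\textbf{Comparison with the paper.} The paper's proof is your reduction followed by a citation of such an isotropic inequality for $\mathcal{F}_0$, used as a black box. It therefore rests on the same unavailable estimate, which the bump example shows is false. Your attempt to prove that estimate by hand has the merit of exposing exactly where it fails.
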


\begin{proof}
Under $S$, the isotropic counterpart $v$ is close to $v_0 = u_0 \circ A^{1/2}$
in $W^{1,2}$. By \cite[Theorem~2.1]{FigalliSerra2019} (see also
\cite{ColomboSpolaorVelichkov2018}), the isotropic functional satisfies
\[
\mathcal{F}_0(v, B_1(y_0)) - \mathcal{F}_0(v_0, B_1(y_0))
\geq \tilde\kappa \left|\log(\|v - v_0\|_{W^{1,2}})\right|^{-\tilde\gamma}
\|v - v_0\|_{W^{1,2}}^2.
\]
Proposition~\ref{prop:functional-transform} gives
$\mathcal{F}_\Phi(u, B_1(x_0)) = \sqrt{\det A} \, \mathcal{F}_0(v, B_1(y_0))$,
and the bi-Lipschitz character of $S$ yields
$\|u - u_0\|_{W^{1,2}(B_1(x_0))} \asymp \|v - v_0\|_{W^{1,2}(B_1(y_0))}$
with constants depending only on $A$. Combining these estimates gives
\eqref{eq:epiperimetric} with $\kappa = \tilde\kappa\sqrt{\det A}\cdot C(A)^{-2}$
and $\gamma = \tilde\gamma$.
\end{proof}

\subsection{Uniqueness of blow-ups and rectifiability}

\begin{theorem}[Uniqueness of blow-ups at singular points]
\label{thm:unique-blowup}
For each $x_0 \in \Sigma$, the blow-up limit is unique: there exists a unique
homogeneous quadratic polynomial $u_{x_0}$ such that
\begin{equation}\label{eq:unique-blowup}
\frac{u(x_0 + ry) - u(x_0)}{r^2} \to u_{x_0}(y)
\quad \text{in } C^{1,\alpha}_{\loc}(\mathbb{R}^{n+1})
\end{equation}
as $r \to 0$, for all $\alpha \in (0,1)$.
\end{theorem}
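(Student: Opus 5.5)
The plan is to reduce the statement to the isotropic uniqueness theorem through the Cahn-Hoffman transform, rather than rerunning the whole Weiss/epiperimetric machinery in anisotropic coordinates. The key structural point is that $S$ is \emph{linear} and fixes the origin, so it commutes with the parabolic rescaling. Set $y_0 = S(x_0)$, $v = u\circ A^{1/2}$, $\tilde\psi = \psi\circ A^{1/2}$, and write $u_r(z) = (u(x_0+rz)-u(x_0))/r^2$ and $v_r(w) = (v(y_0+rw)-v(y_0))/r^2$. Then
\[
u_r(z) = v_r(A^{-1/2}z) \quad\text{for all } z \text{ and all } r>0 .
\]
So the family $\{u_r\}$ converges in $C^{1,\alpha}_{\loc}(\mathbb{R}^{n+1})$ if and only if $\{v_r\}$ does. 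The limits are related by $u_{x_0} = v_{y_0}\circ A^{-1/2}$, which is again a homogeneous quadratic polynomial, of the form \eqref{eq:singular-blowup} once normalized. Local convergence is unaffected by replacing balls by the ellipsoids $A^{1/2}B_R$, since each is contained in a ball of comparable radius with constants depending on $\lambda_1,\lambda_{n+1}$. Hence it suffices to prove uniqueness of the blow-up of $v$ at the singular point $y_0$ of the transformed problem of Proposition~\ref{prop:equivalence}.

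For the isotropic problem I would follow the Weiss-energy route. Introduce the Weiss balanced energy
\[
W(r) = \frac{1}{r^{n+3}}\int_{B_r(y_0)} |Dw|^2\,dy - \frac{2}{r^{n+4}}\int_{\partial B_r(y_0)} w^2\,d\mathcal{H}^n
\]
for $w = v - \tilde\psi$, corrected by the usual $O(r)$ error coming from the $C^{1,1}$ obstacle. Its derivative controls the non-homogeneity:
\[
W'(r) \gtrsim \frac{1}{r}\int_{\partial B_1}\bigl|\partial_r(w_r)\bigr|^2\,d\mathcal{H}^n \;-\; C,
\]
the analogue of Proposition~\ref{prop:frequency-monotonicity}. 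Compactness (Lemma~\ref{lem:blowup-compactness}) shows that for small $r$ the rescalings $v_r$ are $W^{1,2}(B_1)$-close to the set of quadratic blow-ups. Therefore the closeness hypothesis \eqref{eq:closeness-assumption} of the logarithmic epiperimetric inequality, Theorem~\ref{thm:epiperimetric}, in its isotropic form from \cite{FigalliSerra2019}, can be applied at every small scale to the one-homogeneous extension of the trace of $v_r$. Comparing $W(r)$ with the energy of this competitor yields a differential inequality of the form
\[
\frac{d}{dr}\bigl(W(r)-W(0^+)\bigr) \geq \frac{c}{r}\,\bigl(W(r)-W(0^+)\bigr)^{1+\gamma'} - C
\]
for some $\gamma'>0$. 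Integrating gives the logarithmic decay $W(r)-W(0^+) \leq C|\log r|^{-1/\gamma'}$.

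The next step is the Cauchy estimate. For $s<r$, Cauchy--Schwarz on dyadic scales gives
\[
\|v_r - v_s\|_{L^1(\partial B_1)} \leq \int_s^r \|\partial_t v_t\|_{L^1(\partial B_1)}\,dt \lesssim \sum_k \bigl(W(2^{-k}) - W(2^{-k-1})\bigr)^{1/2}.
\]
This bound must tend to $0$ as $r\to0$, which, combined with the uniform $C^{1,1}$ bound and interpolation, gives $C^{1,\alpha}_{\loc}$ convergence of the whole family. Since the limit is unique, it is the homogeneous quadratic $v_{y_0}$.

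The hard step is exactly this summability. A pure logarithmic rate need not make the dyadic square-root sum converge. I would first try the Colombo--Spolaor--Velichkov trick \cite{ColomboSpolaorVelichkov2018}: split the sum on blocks where $W$ decreases by a fixed factor, and apply Cauchy--Schwarz on each block weighted by its length. This gives an explicit logarithmic modulus $\|v_r - v_{y_0}\| \lesssim |\log r|^{-\beta}$ for some $\beta>0$, provided the epiperimetric exponent is in the admissible range. If that range fails, I would fall back on Monneau's monotonicity formula, $\int_{\partial B_1}(v_r - q)^2$ nearly nondecreasing for every admissible quadratic $q$. That route gives uniqueness without a rate and needs only the Weiss monotonicity, which is already available through the transform.
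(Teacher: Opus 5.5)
Your mechanism is the paper's. Its proof is the one-line chain ``logarithmic epiperimetric inequality $\Rightarrow$ differential inequality $\Rightarrow$ $|\log r|^{-\beta}$ decay $\Rightarrow$ uniqueness'', with a pointer to \cite[Section~3]{FigalliSerra2019}. You fill that chain in: the competitor form of the inequality inside a Weiss energy, the dyadic block summation, and Monneau's formula as a fallback. Your exact identity $u_r=v_r\circ A^{-1/2}$ is also a cleaner transfer than the paper's bi-Lipschitz comparison of norms.

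The gap is the sentence ``for the isotropic problem I would follow the Weiss-energy route.'' By Proposition~\ref{prop:equivalence}, on which your reduction rests, $v$ minimizes $\mathcal{F}_0(v)=\int_{D'}|Dv|\,dy$ subject to $v\ge\tilde\psi$, not the Dirichlet energy. This functional is one-homogeneous in $Dv$. In the free set its Euler--Lagrange operator is the one-Laplacian $\divg(Dv/|Dv|)$, not $\Delta$, so $w=v-\tilde\psi$ does not satisfy $\Delta w=f\chi_{\{w>0\}}$. Three tools you need are derived from that equation, i.e.\ from $w$ minimizing $\int(|Dw|^2+2fw)$ among nonnegative functions:
\begin{itemize}
\item the Weiss identity $W'(r)=\frac{2}{r}\int_{\partial B_1}(z\cdot Dw_r-2w_r)^2\,d\mathcal{H}^n$;
\item the Colombo--Spolaor--Velichkov epiperimetric inequality;
\item Monneau's formula.
\end{itemize}
So neither your main route nor your fallback is available for $v$.

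The paper makes the same identification (see the proofs of Theorem~\ref{thm:existence-regularity}(ii), Proposition~\ref{prop:fb-condition} and Theorem~\ref{thm:epiperimetric}). You are no worse off than the paper, but neither argument is a proof. Your reduction does become essentially complete for the quadratic energy $\int_D\frac12\Phi(Du)^2\,dx$, modulo the points below. $S$ maps that energy exactly to $\sqrt{\det A}\int_{D'}\frac12|Dv|^2\,dy$, and then $u_r=v_r\circ A^{-1/2}$ lets you simply quote isotropic uniqueness. For the stated $\mathcal{F}_\Phi$ you would need a monotonicity formula for the total-variation obstacle problem, which is a genuinely different object.

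Even granting the classical problem, several steps need repair.
\begin{itemize}
\item The epiperimetric competitor must be built from the \emph{two}-homogeneous extension $|z|^2c(z/|z|)$ of the trace. That is what matches your $r^{-(n+3)}$, $r^{-(n+4)}$ normalization; a one-homogeneous extension does not compare with $W$.
\item Your $W$ omits the term $2r^{-(n+3)}\int_{B_r(y_0)}f\,w\,dy$ with $f=-\Delta\tilde\psi$. Without it $W$ is not monotone.
\item A $C^{1,1}$ obstacle gives only $f\in L^\infty$, so the error is not $O(r)$. Both the epiperimetric route and Monneau's formula need a H\"older or Dini modulus for $f$ at $y_0$. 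Without one, the rescaled right-hand sides $f(y_0+r\,\cdot)$ need not even converge.
\item As in the statement itself, $(v(y_0+rz)-v(y_0))/r^2$ contains the divergent term $D\tilde\psi(y_0)\cdot z/r$. The Weiss energy controls the rescalings of $w=v-\tilde\psi$, so your Cauchy estimate and conclusion should be phrased for those, not for $v_r$.
\end{itemize}
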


\begin{proof}
Theorem~\ref{thm:epiperimetric} provides a differential inequality for the
distance to the blow-up, which integrates to
\[
\|u_r - u_{x_0}\|_{W^{1,2}(B_1)} \leq C |\log r|^{-\beta}
\]
for some $\beta > 0$. This quantitative decay implies that all blow-up limits
coincide; see \cite[Section~3]{FigalliSerra2019} for the argument.
\end{proof}

\begin{theorem}[Rectifiability of the singular set]
\label{thm:rectifiability}
The singular set $\Sigma$ is $(n-1)$-rectifiable.
\end{theorem}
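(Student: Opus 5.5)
The plan is to deduce rectifiability from Theorem~\ref{thm:unique-blowup} by the classical Caffarelli--Monneau route: stratify by the blow-up polynomials, prove that these depend continuously on the point with a uniform modulus, and cover each stratum by a $C^1$ graph using Whitney's extension theorem. All of this can be done in the transformed coordinates. Since $S(x)=A^{-1/2}x$ is a \emph{linear} isomorphism, it maps $C^1$ submanifolds to $C^1$ submanifolds. It also preserves $\mathcal{H}^{n-1}$-null sets, because it is bi-Lipschitz. Hence $(n-1)$-rectifiability of $\tilde\Sigma=S(\Sigma)$ is equivalent to that of $\Sigma$. I will therefore work with $v=u\circ A^{1/2}$ and its blow-ups $v_{y_0}=u_{x_0}\circ A^{1/2}$. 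These are again homogeneous quadratic polynomials, with matrix $\tilde Q=A^{1/2}QA^{1/2}$.

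\textbf{Step 1 (stratification).} For $y_0\in\tilde\Sigma$ set $d(y_0)=\dim\ker\tilde Q_{y_0}$ and $\tilde\Sigma_k=\{y_0: d(y_0)=k\}$. Since $\tr Q=1$ forces $Q\neq0$, only $k\le n$ occurs. The goal is to show that each $\tilde\Sigma_k$ is contained in countably many $k$-dimensional $C^1$ manifolds. This gives rectifiability of the top-dimensional part, and the lower strata are $\mathcal{H}^{n-1}$-null or rectifiable of lower dimension.

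\textbf{Step 2 (uniform rate and continuity).} I will revisit the proof of Theorem~\ref{thm:unique-blowup} and check that the constants in Theorem~\ref{thm:epiperimetric} are uniform over points in a compact subset of $\tilde\Sigma$. This uses that $\kappa,\gamma,\delta$ depend only on $n$, $A$ and the $C^{1,1}$ bound of Theorem~\ref{thm:existence-regularity}. The outcome should be the estimate
\[
\|v_{y_0,r}-v_{y_0}\|_{L^\infty(B_1)}\le \omega(r):=C|\log r|^{-\beta},
\]
uniformly in $y_0$. Comparing the rescalings at two nearby points $y_0,y_1$ at scale $r=|y_0-y_1|$, and using the $C^{1,1}$ bound, I then expect
\[
\|v_{y_0}-v_{y_1}\|\le C\,\omega(|y_0-y_1|).
\]
So $y_0\mapsto \tilde Q_{y_0}$ is continuous with a uniform (logarithmic) modulus, and
\[
v(y)-\tilde\psi(y)=\tfrac12 (y-y_0)^T\tilde Q_{y_0}(y-y_0)+o(|y-y_0|^2)
\]
holds uniformly.

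\textbf{Step 3 (Whitney extension).} On a compact piece $K\subset\tilde\Sigma_k$, prescribe the jet $(F,DF,D^2F)=(0,0,\tilde Q_{y_0})$ at each $y_0\in K$. Step 2 gives exactly the compatibility conditions, with modulus $\omega$, needed by Whitney's theorem. This produces $F\in C^2$ with $F=0$, $DF=0$ and $D^2F=\tilde Q$ on $K$. On $\tilde\Sigma_k$ the matrix $\tilde Q$ has rank $n+1-k$. By the implicit function theorem applied to the $n+1-k$ components of $DF$ along the range of $\tilde Q$, $K$ lies locally in a $k$-dimensional $C^1$ manifold. A countable exhaustion by compacts then finishes the covering, and pulling back by $S^{-1}$ gives the $C^1$ maps $f_i$ of Theorem~\ref{thm:singular-set}.

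\textbf{Main obstacle.} I expect the delicate step to be the uniformity in Step 2. The epiperimetric inequality is stated at a fixed point with fixed $u_0$, and the closeness hypothesis \eqref{eq:closeness-assumption} must be reached at a scale chosen uniformly in $y_0$. I would first try to get this by compactness: the set of normalized quadratic blow-ups is compact, and the Weiss-type energy is upper semicontinuous in $y_0$. These two facts should make the scale at which $\delta$-closeness holds locally uniform on compact subsets of each stratum. If that fails, I would fall back on Monneau's monotonicity formula to obtain at least a continuous modulus; this still suffices for Whitney's $C^2$-type extension with $o(|y-y_0|^2)$ remainders, and hence for $C^1$ rectifiability.
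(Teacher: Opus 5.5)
There is a genuine gap in the dimension count for the top stratum. Granting Step~2, your Steps~1--3 prove exactly what Step~1 announces: each $\tilde\Sigma_k$ lies in countably many $k$-dimensional $C^1$ manifolds, $0\le k\le n$.

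The trouble is the stratum $k=n$. These are the points where $\tilde Q_{y_0}$ has rank one and the blow-up is $\frac12(e\cdot y)^2$. There your implicit-function step imposes the single equation $e\cdot DF=0$. So it covers $\tilde\Sigma_n$ by $C^1$ \emph{hypersurfaces} of $\mathbb{R}^{n+1}$, not by images of $\mathbb{R}^{n-1}$. What you have proved is that $\Sigma$ lies in countably many $C^1$ hypersurfaces, i.e.\ $n$-rectifiability, which is the classical Caffarelli--Monneau conclusion in $\mathbb{R}^{n+1}$. Your last sentence, that pulling back gives maps $f_i:\mathbb{R}^{n-1}\to\mathbb{R}^{n+1}$ as in Theorem~\ref{thm:singular-set}, does not follow.

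Nor can this be patched inside your framework. A set covered, up to an $\mathcal{H}^{n-1}$-null set, by countably many $C^1$ images of $\mathbb{R}^{n-1}$ is $\mathcal{H}^n$-null. So you would need $\mathcal{H}^n(\tilde\Sigma_n)=0$, and continuity of blow-ups plus Whitney extension give no such information. Concretely, in the normalization $\Delta w=\chi_{\{w>0\}}$ underlying these blow-ups, the rank-one profile $w=\frac12(e\cdot y)^2$ is itself a global solution. This is the profile the paper lists for the top stratum, and its blow-up is $\frac12(e\cdot y)^2$ at every point of $e^\perp$. Hence the set your argument controls can be an entire hyperplane.

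You could instead use the definition of $\Sigma$ in Theorem~\ref{thm:singular-set} (points where $\Gamma$ is not locally $C^1$), under which that example has no singular points. But then you would need a new argument showing that the non-$C^1$ points inside $\tilde\Sigma_n$ are $(n-1)$-rectifiable, and nothing in your outline addresses this.

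The paper's proof has the same architecture as yours, only in sketch form: stratification by the rank of $Q_{x_0}$, continuity of $x_0\mapsto Q_{x_0}$ attributed to Theorem~\ref{thm:unique-blowup}, and the implicit function theorem. At exactly this point it simply asserts that the rank-one stratum ``forms an $(n-1)$-dimensional $C^1$ manifold''. Your count $\dim\ker\tilde Q_{y_0}=n$ shows that assertion is off by one, so the paper does not supply the missing step either.

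Apart from this, your version is the more careful one. The Whitney jet $(0,0,\tilde Q_{y_0})$ is what actually makes the implicit-function step work. You also rightly note that pointwise uniqueness of blow-ups does not by itself give a locally uniform modulus, which the paper takes for granted.

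Be aware, however, that the paper does not establish the Weiss energy or the Monneau formula you plan to use for that uniformity. Its functional $\mathcal{F}_\Phi$ transforms to $\int|Dv|$, which is one-homogeneous rather than a Dirichlet-type energy. So Step~2 also rests on tools you would have to supply yourself.
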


\begin{proof}
Stratify by the rank of the blow-up: set
$\Sigma_k = \{x_0 \in \Sigma : \operatorname{rank}(Q_{x_0}) = k\}$,
where $Q_{x_0}$ is the matrix of the unique blow-up at $x_0$.
Since $\tr Q_{x_0} = 1$, we have $k \geq 1$.

For each $k$, the map $x_0 \mapsto Q_{x_0}$ is continuous by
Theorem~\ref{thm:unique-blowup}, and the implicit function theorem shows that
$\Sigma_k$ is contained in a $C^1$ manifold of dimension $k$.
The top stratum $\Sigma_n$ --- points where $Q_{x_0}$ has rank 1, i.e.,
$u_{x_0}(y) = \frac{1}{2}(e\cdot y)^2$ for some $e$ --- forms an $(n-1)$-dimensional
$C^1$ manifold. Lower strata $\Sigma_k$ with $k \leq n-2$ have Hausdorff dimension
at most $k \leq n-2$, contributing nothing to $\mathcal{H}^{n-1}$. Therefore
$\Sigma$ is $(n-1)$-rectifiable.
\end{proof}

\subsection{Quantitative dependence on the anisotropy}

The constants $\kappa$ and $\gamma$ in \eqref{eq:epiperimetric} depend on $A$ 
through $\lambda_1$, $\lambda_{n+1}$, and $\det A$. Two observations are worth
recording. As the anisotropy degenerates ($\lambda_1 \to 0$ or $\lambda_{n+1} \to
\infty$), $\kappa$ may approach zero, so the epiperimetric inequality could fail
in the limit. In the isotropic limit $A \to I$, the constants converge to those
of Figalli and Serra. This quantitative dependence suggests that the qualitative
structure of the singular set is stable under small perturbations of $A$, but the
limiting behavior for degenerate anisotropies remains an open question.

\begin{conjecture}[$C^{1,\alpha}$-rectifiability]\label{conj:holder-rectifiable}
The singular set $\Sigma$ is $(n-1)$-rectifiable of class $C^{1,\alpha}$ for
some $\alpha > 0$ depending only on $n$ and the ellipticity constants of $A$.
\end{conjecture}

This conjecture would require a $C^{1,\alpha}$ epiperimetric inequality, known in
the isotropic case \cite{SavinYu2021} but open for anisotropies $A \neq I$.

\section{Applications}
\label{sec:applications}

\subsection{Crystal growth}

During crystal growth from solution, the crystal surface adjusts to minimize
its anisotropic surface energy. When the crystal is confined by a substrate or an
adjacent crystal (the obstacle), our model describes the equilibrium configuration.
Theorem~\ref{thm:free-boundary-regularity} then says that the contact region has
smooth boundary, except possibly on a set of Hausdorff dimension at most $n-2$
(the singular set $\Sigma$ in $\mathbb{R}^{n+1}$ corresponds to codimension 2
within the crystal surface).

For an ellipsoidal anisotropy with $A = \diag(\lambda_1, \lambda_2, \lambda_3)$,
the Wulff shape has semi-axes $\sqrt{\lambda_i}$. Different crystallographic
directions correspond to different choices of $A$; the theory predicts that the
facet boundaries (free boundaries) are smooth except at isolated cusps.

\subsection{Anisotropic capillarity}

The anisotropic capillary problem \cite{DePhilippisMaggi2015} concerns the
equilibrium shape of a liquid droplet on a solid substrate with direction-dependent
surface tension. Our obstacle problem can be viewed as a variant in which the
droplet is constrained from below by a rigid surface. 

The Robin condition \eqref{eq:robin-condition} is a generalized Young's law for
the anisotropic contact angle. When $A = I$, one recovers the classical
Young's law $\cos\theta = (\gamma_{SG} - \gamma_{SL})/\gamma_{LG}$.
The term $\Lambda(y_0)$ in \eqref{eq:robin-condition} encodes the correction due
to anisotropy: it vanishes when the free boundary is orthogonal to the matrix $A$
(meaning the local geometry aligns with the Wulff shape), and is largest when the
contact is oblique.

\section{Open Problems}
\label{sec:open}

Several directions remain unexplored.

\begin{enumerate}[label=\arabic*.]
\item \textbf{General anisotropies.} The results should extend to convex
anisotropies $\gamma : S^n \to \mathbb{R}^+$ that are not ellipsoidal. The main
obstruction is that the Cahn-Hoffman map $\xi_\gamma$ is no longer linear, and
the equivalent isotropic problem acquires variable coefficients. A perturbative
approach near $A = I$ might be tractable.

\item \textbf{The parabolic problem.} The anisotropic Stefan problem --- an
obstacle problem with time-dependent interface --- is well understood in the
isotropic case \cite{Caffarelli1977, CaffarelliSalsa2005} but open for anisotropic
surface energies. The frequency monotonicity of Proposition~\ref{prop:frequency-monotonicity}
may serve as a starting point.

\item \textbf{$C^{1,\alpha}$-rectifiability of the singular set.}
Conjecture~\ref{conj:holder-rectifiable} asks for a sharper structure than
Theorem~\ref{thm:rectifiability}. In the isotropic case this was settled by
\cite{SavinYu2021}; the anisotropic case would require a new epiperimetric
inequality.

\item \textbf{Non-convex obstacles.} When $\psi$ is not convex, the free boundary
may develop singularities not present in the convex case. The structure of these
singularities is not fully understood even isotropically, and the interaction with
anisotropy introduces additional complications.
\end{enumerate}

\section*{Acknowledgments}

The authors thank their colleagues at UFMG for valuable discussions on anisotropic
geometric variational problems.


\begin{thebibliography}{99}

\bibitem{Almgren1976}
F.~J. Almgren,
\textit{Existence and regularity almost everywhere of solutions to elliptic
variational problems with constraints},
Mem.\ Amer.\ Math.\ Soc.\ \textbf{4} (1976), no.\ 165.

\bibitem{BrezisKinderlehrer1973}
H.~Brezis and D.~Kinderlehrer,
\textit{The smoothness of solutions to nonlinear variational inequalities},
Indiana Univ.\ Math.\ J.\ \textbf{23} (1973/74), 831--844.

\bibitem{Braga2021}
J.~E.~M. Braga, D.~Moreira, and L.~Wang,
\textit{The obstacle problem for a class of degenerate fully nonlinear operators},
J.\ Differential Equations \textbf{303} (2021), 29--56.

\bibitem{Caffarelli1977}
L.~A. Caffarelli,
\textit{The regularity of free boundaries in higher dimensions},
Acta Math.\ \textbf{139} (1977), 155--184.

\bibitem{Caffarelli1980}
L.~A. Caffarelli,
\textit{Compactness methods in free boundary problems},
Comm.\ Partial Differential Equations \textbf{5} (1980), 427--448.

\bibitem{Caffarelli1998}
L.~A. Caffarelli,
\textit{The obstacle problem revisited},
J.\ Fourier Anal.\ Appl.\ \textbf{4} (1998), 383--402.

\bibitem{CaffarelliSalsa2005}
L.~A. Caffarelli and S.~Salsa,
\textit{A Geometric Approach to Free Boundary Problems},
Graduate Studies in Mathematics, vol.\ 68, Amer.\ Math.\ Soc., Providence, 2005.

\bibitem{CahnHoffman1974}
J.~W. Cahn and D.~W. Hoffman,
\textit{A vector thermodynamics for anisotropic surfaces~--- II. Curved and faceted
surfaces},
Acta Metall.\ \textbf{22} (1974), 1205--1214.

\bibitem{ColomboSpolaorVelichkov2018}
M.~Colombo, L.~Spolaor, and B.~Velichkov,
\textit{A logarithmic epiperimetric inequality for the obstacle problem},
Geom.\ Funct.\ Anal.\ \textbf{28} (2018), no.\ 4, 1029--1061.

\bibitem{DeGiorgi1961}
E.~De Giorgi,
\textit{Frontiere orientate di misura minima},
Sem.\ Mat.\ Scuola Norm.\ Sup.\ Pisa (1960--61).

\bibitem{DePhilippisMaggi2015}
G.~De Philippis and F.~Maggi,
\textit{Regularity of free boundaries in anisotropic capillarity problems and the
validity of Young's law},
Arch.\ Ration.\ Mech.\ Anal.\ \textbf{216} (2015), no.\ 2, 473--568.

\bibitem{DeRosaTione2022}
A.~De Rosa and R.~Tione,
\textit{Regularity for graphs with bounded anisotropic mean curvature},
Invent.\ Math.\ \textbf{230} (2022), no.\ 2, 463--507.

\bibitem{DeSilvaSavin2020}
D.~De Silva and O.~Savin,
\textit{A short proof of boundary Harnack principle},
J.\ Differential Equations \textbf{269} (2020), no.\ 3, 2419--2429.

\bibitem{Dinghas1944}
A.~Dinghas,
\textit{\"{U}ber einen geometrischen Satz von Wulff f\"{u}r die
Gleichgewichtsform von Kristallen},
Z.\ Kristallogr.\ \textbf{105} (1944), 304--314.

\bibitem{Figalli2018}
A.~Figalli,
\textit{Free boundary regularity in obstacle problems},
lecture notes, 2018.
Available at \url{https://people.math.ethz.ch/~afigalli/lecture-notes-pdf/}.

\bibitem{FigalliSerra2019}
A.~Figalli and J.~Serra,
\textit{On the fine structure of the free boundary for the classical obstacle
problem},
Invent.\ Math.\ \textbf{215} (2019), no.\ 1, 311--366.

\bibitem{FotouhiShahgholian2024}
S.~Fotouhi and H.~Shahgholian,
\textit{A free boundary problem for an elliptic system},
J.\ Differential Equations \textbf{396} (2024), 1--34.

\bibitem{GarofaloLin1986}
N.~Garofalo and F.~H. Lin,
\textit{Monotonicity properties of variational integrals, $A_p$ weights and unique
continuation},
Indiana Univ.\ Math.\ J.\ \textbf{35} (1986), no.\ 2, 245--268.

\bibitem{Gruter1987}
M.~Gr\"{u}ter,
\textit{Optimal regularity for codimension one minimal surfaces with a free
boundary},
Manuscripta Math.\ \textbf{58} (1987), no.\ 3, 295--343.

\bibitem{GruterJost1986}
M.~Gr\"{u}ter and J.~Jost,
\textit{Allard type regularity results for varifolds with free boundaries},
Ann.\ Scuola Norm.\ Sup.\ Pisa Cl.\ Sci.\ (4) \textbf{13} (1986), no.\ 1,
129--169.

\bibitem{HauserVoigt2005}
F.~Hauser and A.~Voigt,
\textit{A discrete scheme for parametric anisotropic surface diffusion},
J.\ Sci.\ Comput.\ \textbf{30} (2007), no.\ 2, 223--235.

\bibitem{Kinderlehrer1973}
D.~Kinderlehrer,
\textit{How a minimal surface leaves an obstacle},
Acta Math.\ \textbf{130} (1973), 221--242.

\bibitem{KinderlehrerNirenberg1977}
D.~Kinderlehrer and L.~Nirenberg,
\textit{Regularity in free boundary problems},
Ann.\ Scuola Norm.\ Sup.\ Pisa Cl.\ Sci.\ (4) \textbf{4} (1977), no.\ 2,
373--391.

\bibitem{KoisoPalmer2019}
M.~Koiso and B.~Palmer,
\textit{Uniqueness of closed equilibrium hypersurfaces for anisotropic surface
energy and application to a capillary problem},
J.\ Math.\ Fluid Mech.\ \textbf{24} (2019), no.\ 4, 88.

\bibitem{LewyStampacchia1969}
H.~Lewy and G.~Stampacchia,
\textit{On the existence and regularity of solutions of a non-coercive variational
inequality},
in: \textit{Nonlinear Partial Differential Equations and Their Applications},
Res.\ Notes in Math., vol.\ 1, Pitman, Boston, 1983, pp.\ 87--95.

\bibitem{Nitsche1985}
J.~C.~C. Nitsche,
\textit{Stationary partitioning of convex bodies},
Arch.\ Ration.\ Mech.\ Anal.\ \textbf{89} (1985), no.\ 1, 1--19.

\bibitem{RosOton2019}
X.~Ros-Oton,
\textit{Regularity of free boundaries in obstacle problems},
lecture notes, CIME Summer School, 2019.
Available at \url{https://www.ub.edu/pde/xros/}.

\bibitem{RosOtonTorres2021}
X.~Ros-Oton and J.~Torres-Latorre,
\textit{New boundary Harnack inequalities with right hand side},
Arch.\ Ration.\ Mech.\ Anal.\ \textbf{234} (2019), 1413--1444.

\bibitem{SavinYu2021}
O.~Savin and H.~Yu,
\textit{Regularity of the singular set in the fully nonlinear obstacle problem},
J.\ Math.\ Pures Appl.\ (9) \textbf{154} (2021), 1--22.

\bibitem{Signorini1959}
A.~Signorini,
\textit{Questioni di elasticit\`a non linearizzata e semilinearizzata},
Rend.\ Mat.\ e Appl.\ (5) \textbf{18} (1959), 95--139.

\bibitem{Taylor1978}
J.~E. Taylor,
\textit{Crystalline variational problems},
Bull.\ Amer.\ Math.\ Soc.\ \textbf{84} (1978), no.\ 4, 568--588.

\bibitem{Weiss1999}
G.~S. Weiss,
\textit{A homogeneity improvement approach to the obstacle problem},
Invent.\ Math.\ \textbf{138} (1999), no.\ 1, 23--50.

\bibitem{Wulff1901}
G.~Wulff,
\textit{Zur Frage der Geschwindigkeit des Wachstums und der Aufl\"{o}sung der
Krystallfl\"{a}chen},
Z.\ Kristallogr.\ \textbf{34} (1901), 449--530.

\end{thebibliography}
\end{document}